	\newtheorem{thm}{Theorem}
	\newtheorem*{thm*}{Theorem}
	\newtheorem{cor}{Corollary}
	\newtheorem{lem}{Lemma}
	\newtheorem{rem}{Remark}
	\newtheorem{prop}{Proposition}
	\newtheorem{defn}{Definition}
	\newtheorem{notation}{Terminology}
	\newtheorem{ex}{Example}
	\newcounter{constant}
	 \newcommand{\rn}{\mathbb{R}^n}
		   	  \newcommand{\squatre}{\mathbb{S}^{4}}
	  \newcommand{\et}{\mathbb{E}^3}
	    \newcommand{\rpdeux}{\mathbb{RP}^2}
	  \newcommand{\equat}{\mathbb{E}^4}
\def\XXint#1#2#3{{\setbox0=\hbox{$#1{#2#3}{\int}$ }
\vcenter{\hbox{$#2#3$ }}\kern-.6\wd0}}
      \def\Gread@@xetex#1{%
        \IfFileExists{"\Gin@base".bb}%
        {\Gread@eps{\Gin@base.bb}}%
        {\Gread@@xetex@aux#1}%
      }
    \definecolor{urlcolor}{rgb}{0,.145,.698}
    \definecolor{linkcolor}{rgb}{.71,0.21,0.01}
    \definecolor{citecolor}{rgb}{.12,.54,.11}
    \definecolor{ansi-black}{HTML}{3E424D}
    \definecolor{ansi-black-intense}{HTML}{282C36}
    \definecolor{ansi-red}{HTML}{E75C58}
    \definecolor{ansi-red-intense}{HTML}{B22B31}
    \definecolor{ansi-green}{HTML}{00A250}
    \definecolor{ansi-green-intense}{HTML}{007427}
    \definecolor{ansi-yellow}{HTML}{DDB62B}
    \definecolor{ansi-yellow-intense}{HTML}{B27D12}
    \definecolor{ansi-blue}{HTML}{208FFB}
    \definecolor{ansi-blue-intense}{HTML}{0065CA}
    \definecolor{ansi-magenta}{HTML}{D160C4}
    \definecolor{ansi-magenta-intense}{HTML}{A03196}
    \definecolor{ansi-cyan}{HTML}{60C6C8}
    \definecolor{ansi-cyan-intense}{HTML}{258F8F}
    \definecolor{ansi-white}{HTML}{C5C1B4}
    \definecolor{ansi-white-intense}{HTML}{A1A6B2}
    \definecolor{ansi-default-inverse-fg}{HTML}{FFFFFF}
    \definecolor{ansi-default-inverse-bg}{HTML}{000000}
    \definecolor{outerrorbackground}{HTML}{FFDFDF}
    \let\Oldtex\TeX
    \let\Oldlatex\LaTeX
    \renewcommand{\TeX}{\textrm{\Oldtex}}
    \renewcommand{\LaTeX}{\textrm{\Oldlatex}}
    \title{Untitled}
\def\PY@reset{\let\PY@it=\relax \let\PY@bf=\relax%
    \let\PY@ul=\relax \let\PY@tc=\relax%
    \let\PY@bc=\relax \let\PY@ff=\relax}
\def\PY@tok#1{\csname PY@tok@#1\endcsname}
\def\PY@toks#1+{\ifx\relax#1\empty\else%
    \PY@tok{#1}\expandafter\PY@toks\fi}
\def\PY@do#1{\PY@bc{\PY@tc{\PY@ul{%
    \PY@it{\PY@bf{\PY@ff{#1}}}}}}}
\def\PY#1#2{\PY@reset\PY@toks#1+\relax+\PY@do{#2}}
        \newbox\Wrappedcontinuationbox 
        \newbox\Wrappedvisiblespacebox 
        \newcommand*\Wrappedvisiblespace {\textcolor{red}{\textvisiblespace}} 
        \newcommand*\Wrappedcontinuationsymbol {\textcolor{red}{\llap{\tiny$\m@th\hookrightarrow$}}} 
        \newcommand*\Wrappedcontinuationindent {3ex } 
        \newcommand*\Wrappedafterbreak {\kern\Wrappedcontinuationindent\copy\Wrappedcontinuationbox} 
        \newcommand*\Wrappedbreaksatspecials {% 
            \def\PYGZus{\discretionary{\char`\_}{\Wrappedafterbreak}{\char`\_}}% 
            \def\PYGZob{\discretionary{}{\Wrappedafterbreak\char`\{}{\char`\{}}% 
            \def\PYGZcb{\discretionary{\char`\}}{\Wrappedafterbreak}{\char`\}}}% 
            \def\PYGZca{\discretionary{\char`\^}{\Wrappedafterbreak}{\char`\^}}% 
            \def\PYGZam{\discretionary{\char`\&}{\Wrappedafterbreak}{\char`\&}}% 
            \def\PYGZlt{\discretionary{}{\Wrappedafterbreak\char`\<}{\char`\<}}% 
            \def\PYGZgt{\discretionary{\char`\>}{\Wrappedafterbreak}{\char`\>}}% 
            \def\PYGZsh{\discretionary{}{\Wrappedafterbreak\char`\#}{\char`\#}}% 
            \def\PYGZpc{\discretionary{}{\Wrappedafterbreak\char`\%}{\char`\%}}% 
            \def\PYGZdl{\discretionary{}{\Wrappedafterbreak\char`\$}{\char`\$}}% 
            \def\PYGZhy{\discretionary{\char`\-}{\Wrappedafterbreak}{\char`\-}}% 
            \def\PYGZsq{\discretionary{}{\Wrappedafterbreak\textquotesingle}{\textquotesingle}}% 
            \def\PYGZdq{\discretionary{}{\Wrappedafterbreak\char`\"}{\char`\"}}% 
            \def\PYGZti{\discretionary{\char`\~}{\Wrappedafterbreak}{\char`\~}}% 
        } 
        \newcommand*\Wrappedbreaksatpunct {% 
            \lccode`\~`\.\lowercase{\def~}{\discretionary{\hbox{\char`\.}}{\Wrappedafterbreak}{\hbox{\char`\.}}}% 
            \lccode`\~`\,\lowercase{\def~}{\discretionary{\hbox{\char`\,}}{\Wrappedafterbreak}{\hbox{\char`\,}}}% 
            \lccode`\~`\;\lowercase{\def~}{\discretionary{\hbox{\char`\;}}{\Wrappedafterbreak}{\hbox{\char`\;}}}% 
            \lccode`\~`\:\lowercase{\def~}{\discretionary{\hbox{\char`\:}}{\Wrappedafterbreak}{\hbox{\char`\:}}}% 
            \lccode`\~`\?\lowercase{\def~}{\discretionary{\hbox{\char`\?}}{\Wrappedafterbreak}{\hbox{\char`\?}}}% 
            \lccode`\~`\!\lowercase{\def~}{\discretionary{\hbox{\char`\!}}{\Wrappedafterbreak}{\hbox{\char`\!}}}% 
            \lccode`\~`\/\lowercase{\def~}{\discretionary{\hbox{\char`\/}}{\Wrappedafterbreak}{\hbox{\char`\/}}}% 
            \catcode`\.\active
            \catcode`\,\active 
            \catcode`\;\active
            \catcode`\:\active
            \catcode`\?\active
            \catcode`\!\active
            \catcode`\/\active 
            \lccode`\~`\~ 	
        }
    \let\OriginalVerbatim=\Verbatim
    \renewcommand{\Verbatim}[1][1]{%
        %\parskip\z@skip
        \sbox\Wrappedcontinuationbox {\Wrappedcontinuationsymbol}%
        \sbox\Wrappedvisiblespacebox {\FV@SetupFont\Wrappedvisiblespace}%
        \def\FancyVerbFormatLine ##1{\hsize\linewidth
            \vtop{\raggedright\hyphenpenalty\z@\exhyphenpenalty\z@
                \doublehyphendemerits\z@\finalhyphendemerits\z@
                \strut ##1\strut}%
        }%
        % If the linebreak is at a space, the latter will be displayed as visible
        % space at end of first line, and a continuation symbol starts next line.
        % Stretch/shrink are however usually zero for typewriter font.
        \def\FV@Space {%
            \nobreak\hskip\z@ plus\fontdimen3\font minus\fontdimen4\font
            \discretionary{\copy\Wrappedvisiblespacebox}{\Wrappedafterbreak}
            {\kern\fontdimen2\font}%
        }%
        
        % Allow breaks at special characters using \PYG... macros.
        \Wrappedbreaksatspecials
        % Breaks at punctuation characters . , ; ? ! and / need catcode=\active 	
        \OriginalVerbatim[#1,codes*=\Wrappedbreaksatpunct]%
    }
    \definecolor{incolor}{HTML}{303F9F}
    \definecolor{outcolor}{HTML}{D84315}
    \definecolor{cellborder}{HTML}{CFCFCF}
    \definecolor{cellbackground}{HTML}{F7F7F7}
    \newcommand{\boxspacing}{\kern\kvtcb@left@rule\kern\kvtcb@boxsep}
\begin{document}

 \title{\bf \Large    Biharmonic     Hypersurfaces in Euclidean Space  }

	\title{\bf \fontsize{12}{12} \selectfont     BIHARMONIC HYPERSURFACES IN EUCLIDEAN SPACES }
	\author{ \fontsize{10}{10} \selectfont    HIBA BIBI,  MARC SORET AND MARINA VILLE}
	%\vskip 1 in 
	\date{}

\maketitle
 %     \date{\today}
%%%%%%%%%%%%%%%%%%%%%%%%%%%

		\begin{abstract} 
An isometric immersion  $X: \Sigma^n  \longrightarrow  \mathbb{E}^{n+1}$ is  biharmonic if
   $\Delta^2 X = 0$, i.e.  if  $\Delta H =0$, where  $\Delta$ and  $H$ are
  the metric Laplacian   and the mean curvature  vector field of  $\Sigma^n$  respectively.  	
  More generally, biconservative hypersurfaces (BCH) are isometric immersions for which only the tangential part of the biharmonic equation vanishes.
  We study and construct BCH that are holonomic, i.e. for which the principal curvature directions define an integrable net, and we deduce that 
  $\Sigma^n$ is  a holonomic biharmonic hypersurface if and only if it is minimal.
 %
%- as B.Y. Chen conjectured - we show  that  a biharmonic submanifold  of  the Euclidean space must be minimal.
		
 	\end{abstract}
	%\keywords{  Gauss map, Grassmannian, minimal surface, stability}
\let\thefootnote\relax\footnotetext{ \hskip -.15 in  2024 { \it Mathematics Subject Classification}. Primary 53B25 53C42  58E20\\
keywords: Hypersurfaces, Biharmonic, Biconservative,
Holonomic,
Constant Mean Curvature, Isoparametric Submanifolds.}

%%%%%%%%%%%%%%%%%%%%%%%%%%%%%%%%%%%%%%%%%%%%
%	\begin{document}
%\begin{center}
%{\bf \huge     Biharmonic  and Isoparametric Hypersurfaces in Euclidean Space}\\
%\vskip .5 in 
%October 2   2023
%\end{center}

\section{Introduction}

An isometric immersion in a Euclidean space, $X:\Sigma^n \longrightarrow \mathbb{E}^{N}$, is  biharmonic if
   $\Delta^2 X = 0$, i.e. if  $\Delta H =0$, where  $\Delta$ and  $H$ are   
  the metric Laplacian   and the mean curvature vector field   of  $\Sigma^n$ respectively.\\
The immersion $X$  is a  critical point of a "bienergy" functional (see \cite{EL}).
  The image of such an immersion is also  called a biharmonic submanifold.
 B.Y. Chen conjectured in 
\cite{C0} that  a biharmonic submanifold  of  the Euclidean space must be minimal. 
In  this direction, a long range argument   in  \cite{AM}   proves B.Y. Chen's conjecture 
  for complete, topologically proper, biharmonic submanifolds. The proof  
uses  the maximum principle  at infinity on a complete submanifold without singularities   applied to the square of the mean curvature   function $|H|^2$.
We will be more interested in  the case of hypersurfaces and we will study small range properties of biharmonic hypersurfaces (BHH), as well as properties of biconservative hypersurfaces (BCH). The BCH hypersurfaces are less restrictive and are defined as critical points of the bienergy functional with respect to tangential deformation, i.e. for which $(\Delta H)^\top=0$.
\\
 Minimal hypersurfaces  are trivially BHH, thus we are interested in studying BHH which are not minimal, called proper BHH.
 Many works were carried out proving the non-existence of proper BHH in Euclidean spaces, hence confirming the B.Y. Chen's conjecture. In low dimensions, the B.Y. Chen's conjecture was proved in $\et$ (\cite{Chen-Ishikawa} and \cite{Jiang}), in 
 $\equat$  \cite{HV},  in $\mathbb{E}^5$ \cite{F},  and recently in $\mathbb{E}^6$  \cite{Fu}.
 \\
Constant mean curvature (CMC) hypersurfaces in a Euclidean space are trivially biconservative. Thus, we look for BCH which are not CMC, called proper BCH. An exhaustive description   of all proper BCH  in $\et$ was given  in \cite{CMOP}, and then generalized  to the existence of 
a family of $SO(p)$-symmetric BCH of $\mathbb{E}^p$   \cite{MOR}.  Also in  \cite{MOR} the authors  constructed  proper BCH   
which are $SO(p)\times SO(q)$-symmetric    in $\mathbb{E}^{p+q}$,  and they made a global study on these hypersurfaces.\\
This paper studies   the short range  behavior of  BCH and BHH   that are holonomic, i.e.  if the principal curvature directions define an integrable net, i.e.
in other words  if the principal curvature lines   define on an open subdomain  a coordinate system (cf. for example \cite{DT} and \cite{Re}).  \\
 More precisely,  we   focus on a new construction of local   proper BCH. All the known  examples   so far are locally fibered by spheres or products of spheres, which are isoparametric submanifolds. We also construct  proper BCH which    are fibered over any given  isoparametric submanifold of
 codimension $2$ in $\mathbb{E}^{n+1}$. We recall that, according to the definition  in \cite{T},
 a  submanifold is isoparametric if its normal bundle is flat, and if the principal curvatures in the direction of any  parallel normal vector field is constant (see for  example \cite{T}).\\
 \\
 In this article we prove the following main results: 
\begin{thm}\label{letheoreme1} A   holonomic proper BCH  $\Sigma^n $ in $ \mathbb{E}^{n+1}$ is  foliated by 
the level sets of the mean curvature function  $h$ of $\Sigma^n$ which  are isoparametric codimension $2$ submanifolds  of $\mathbb{E}^{n+1}$ in 
a neighbourhood  of a regular point of $h$. The symmetries of the leaves extend to symmetries of $\Sigma^n$. The integral curves of the gradient field $\nabla h$ on $\Sigma^n$ are congruent planar curves 
governed by an ODE of order two.
Conversely,  for any codimension $2$ isoparametric submanifold $ U_0\subset \mathbb{E}^{n+1}$ which is holonomic, there locally  
exists a  
proper BCH  obtained by a local normal evolution of $U_0$ in the normal bundle of $U_0$ that preserves the fibers of the normal bundle of 
$U_0$ and the symmetry group of $U_0$. 
\end{thm}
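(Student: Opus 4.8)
The plan is to split the proof into the two asserted directions. For the forward direction, I would begin with a holonomic proper BCH $\Sigma^n$, so principal curvature lines form coordinates on a neighborhood of a regular point of the mean curvature function $h$. The biconservativity condition $(\Delta H)^\top = 0$ for a hypersurface can be written in terms of the shape operator $A$ and $h$; the standard computation gives the tangential equation equivalent to $A(\nabla h) = -\frac{n}{2} h\,\nabla h$ (up to normalization conventions), which says $\nabla h$ is a principal direction and pins down the associated principal curvature as a function of $h$ alone. First I would exploit this: on the level sets $h = \text{const}$, the gradient $\nabla h$ is orthogonal to each leaf and is a principal direction, so the leaves are integral manifolds of the complementary $(n-1)$ principal distributions. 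Using the holonomic (integrable net) hypothesis together with the Codazzi equations, I would show the remaining principal curvatures are constant along each leaf, so that each leaf, together with the normal $\nu$ of $\Sigma$ and the unit vector $\nabla h/|\nabla h|$, spans a rank-$2$ normal bundle in $\mathbb{E}^{n+1}$ in which the leaf is isoparametric.

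The technical heart of the forward direction is verifying the two defining properties of isoparametricity of the leaf as a codimension-$2$ submanifold of $\mathbb{E}^{n+1}$: flatness of its normal bundle, and constancy of the principal curvatures in any parallel normal direction. I would obtain normal-bundle flatness from the holonomic structure plus the fact that the two normal directions ($\nu$ and $\nabla h/|\nabla h|$) are themselves organized by the principal foliation; the Codazzi equation, specialized to the coordinate net, forces the normal connection to be flat. Constancy of the principal curvatures along the leaf then follows by differentiating the BCH relation tangentially and feeding in that $h$ is constant on the leaf. For the statement about the gradient curves, I would note that an integral curve of $\nabla h$ meets every leaf orthogonally and lies in the $2$-plane spanned by $\nu$ and $\nabla h/|\nabla h|$; writing the immersion as a normal evolution $Y(s) = F(u) + r_1(s)\,e_1(u) + r_2(s)\,e_2(u)$ over a fixed leaf point and reducing the BCH equation along this curve yields a second-order ODE for the pair $(r_1, r_2)$, exhibiting the curves as congruent planar curves; the symmetry extension is immediate once the evolution is shown to be leaf-independent.

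For the converse, I would start from a given holonomic codimension-$2$ isoparametric submanifold $U_0 \subset \mathbb{E}^{n+1}$ and construct $\Sigma^n$ as a normal evolution inside the (flat, by isoparametricity) normal bundle of $U_0$, preserving the fibers. The plan is to prescribe the evolution by a radial profile curve $s \mapsto (r_1(s), r_2(s))$ in each normal $2$-plane and to compute the induced shape operator of the evolved hypersurface using the tube/parallel-hypersurface formulas: because $U_0$ is isoparametric, the principal curvatures of the parallel leaves depend only on the profile and the constant principal curvatures of $U_0$, not on the point of $U_0$. I would then impose the BCH equation, which, after this reduction, becomes exactly the order-two ODE found in the forward direction; local solvability of this ODE (via Picard–Lindelöf, with initial data making $\Sigma^n$ non-CMC) produces a proper BCH. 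Since every step of the construction is equivariant with respect to the symmetry group of $U_0$ and respects the normal fibers, the resulting $\Sigma^n$ inherits those symmetries.

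\textbf{The main obstacle} I anticipate is the converse's curvature bookkeeping: one must show that imposing $(\Delta H)^\top = 0$ on the evolved hypersurface collapses to a \emph{single} ODE in the profile $(r_1, r_2)$ with no residual constraints tying together different principal curvatures of $U_0$. This requires the isoparametric and holonomic hypotheses to conspire so that all the leaf-dependent terms in the biconservativity equation cancel; isolating precisely which combination of the Gauss and Codazzi identities guarantees this cancellation — and checking that the profile ODE admits genuinely non-CMC (hence \emph{proper}) solutions rather than only recovering the trivial CMC case — is where the real work lies.
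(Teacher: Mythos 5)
There is a genuine gap in your forward direction, and it sits exactly where the paper's real work lies. You claim that constancy of the leaf's principal curvatures ``follows by differentiating the BCH relation tangentially and feeding in that $h$ is constant on the leaf.'' This cannot work: the tangential equation $(\Delta H)^\top=0$ is equivalent (Proposition \ref{biTan}) to the alternative $\nabla_{e_i}h=0$ or $nh+2\lambda_i=0$ in each principal direction, so on a proper BCH it pins down only the single principal curvature $\lambda_n=-\tfrac{nh}{2}$ in the direction of $\nabla h$. It imposes no constraint at all on the remaining principal curvatures $\lambda_1,\dots,\lambda_{n-1}$ of the leaf, and differentiating it along a leaf (on which $h$ is constant) returns only trivial information about $\lambda_n$. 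Your congruence claim for the gradient curves is then circular: you want to read congruence off a profile ODE ``with leaf-independent coefficients,'' but leaf-independence of those coefficients is precisely the isoparametricity you have not established.

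The paper breaks this circle in the opposite order. First, holonomicity alone (orthogonality of the curvature-line coordinates, $\langle X_{,i},X_{,n}\rangle=0$) forces the profile function $\alpha$ in the normal-evolution parametrization to depend on $x_n$ only (Lemma \ref{para4}: one gets $\alpha_{,n}\alpha_{,i}=0$, and $\alpha_{,n}=0$ would make $\Sigma^n$ minimal), so the gradient curves are congruent planar curves \emph{before} anything is known about the leaf's curvatures. Second --- and this is the step you are missing --- the single function $\alpha$ satisfies, for every leaf point $s$, a second-order ODE whose rational right-hand side has coefficients built from the leaf data $\lambda_{0i}(s),\mu_{0i}(s)$ (Lemma \ref{paraODE}); equating these rational expressions at different points $s,s'$ and invoking uniqueness of partial-fraction decomposition when $x_n,\alpha,\alpha'$ are algebraically independent, or, when they are not, extending $\alpha$ to a multi-valued algebraic function on $\mathbb{C}$ and matching poles, forces $(\lambda_{0i},\mu_{0i})$ to be independent of $s$, i.e.\ the leaf is isoparametric (Proposition \ref{para5}). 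No pointwise Codazzi-type computation substitutes for this algebraic argument, which is the technical heart of the theorem. Your converse, by contrast, is essentially the paper's: isoparametricity makes the profile ODE's coefficients point-independent, and local solvability of that ODE (explicitly integrable in the paper's Proposition \ref{SigmaU}) produces the proper BCH equivariantly.
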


We  deduce   from  Theorem 1 
\begin{cor}
Holonomic proper BCH of $\mathbb{E}^{n+1} $    are  either foliated by round spheres $\mathbb{S}^{n-1}$,  by  products of round spheres
 $\mathbb{S}^{p}\times \mathbb{S}^{q}$, by  cylinders   $\mathbb{S}^{p}\times \mathbb{R}^q$, where $p+q=n-1$, or  by
  $\mathbb{S}^{p}\times \mathbb{S}^{q}\times\mathbb{R}^r$, where  $p+q+r=n-1$.
\end{cor}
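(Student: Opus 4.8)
The plan is to read the classification straight off Theorem~\ref{letheoreme1}. By that theorem, in a neighbourhood of a regular point of $h$ the hypersurface $\Sigma^n$ is foliated by the level sets $U_c=h^{-1}(c)$, each of which is a holonomic isoparametric submanifold of $\mathbb{E}^{n+1}$ of codimension $2$, hence of dimension $n-1$. Thus the corollary is reduced to the purely local classification of holonomic isoparametric submanifolds of codimension $2$ in Euclidean space, and this is what I would establish.

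First I would use the flatness of the rank-$2$ normal bundle of a leaf $U_c$ to fix a parallel orthonormal normal frame $\{\nu_1,\nu_2\}$. The isoparametric hypothesis makes the shape operators $A_{\nu_1}$ and $A_{\nu_2}$ have constant eigenvalues, and holonomicity lets me diagonalise them simultaneously along the principal coordinate net. To the $i$-th principal direction I attach the constant curvature normal $\eta_i=\kappa_i^1\,\nu_1+\kappa_i^2\,\nu_2$, where $\kappa_i^1,\kappa_i^2$ are the (constant) principal curvatures in that direction. Grouping the principal directions by the finitely many distinct values of $\eta_i$ splits $TU_c$ into mutually orthogonal curvature distributions $E_\eta$. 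Along a coordinate curve tangent to $E_\eta$, constancy of the principal curvature forces the curve to have constant geodesic curvature in the affine $2$-plane spanned by its tangent and $\eta$; hence the integral manifold of $E_\eta$ is a round sphere $\mathbb{S}^{m(\eta)}\!\left(1/|\eta|\right)$ when $\eta\neq 0$ and an affine subspace $\mathbb{R}^{m(\eta)}$ when $\eta=0$.

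The decisive step is to show that holonomicity forces the distinct nonzero curvature normals to be pairwise orthogonal: an integrable principal net compels the corresponding focal directions to meet at right angles, so the $\eta$'s are orthogonal vectors in the $2$-dimensional normal space and there are at most two of them. Since the $\eta$'s are parallel and the distributions $E_\eta$ are orthogonal and autoparallel, the second fundamental form splits compatibly with $TU_c=\bigoplus_\eta E_\eta$, and a Moore-type decomposition lemma presents $U_c$ locally as the extrinsic product of its spherical factors (one per nonzero $\eta$) and its flat factor $\mathbb{R}^{m(0)}$, sitting in mutually orthogonal affine subspaces. The all-flat case ($U_c\cong\mathbb{R}^{n-1}$) makes the leaf totally geodesic and $h$ locally constant, which is excluded for a proper BCH, so at least one sphere occurs. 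Enumerating the cases — one or two spherical factors, with or without a flat factor — gives precisely $\mathbb{S}^{n-1}$, $\mathbb{S}^p\times\mathbb{S}^q$ with $p+q=n-1$, $\mathbb{S}^p\times\mathbb{R}^q$ with $p+q=n-1$, and $\mathbb{S}^p\times\mathbb{S}^q\times\mathbb{R}^r$ with $p+q+r=n-1$. I expect the main obstacle to be this third step: rigorously deriving orthogonality of the curvature normals from holonomicity (equivalently, ruling out the irreducible, inhomogeneous rank-$2$ isoparametric families, whose curvature normals meet at angles $\pi/3,\pi/4,\pi/6$) and verifying the hypotheses of the decomposition lemma so that the intrinsic product is realised as an extrinsic one.
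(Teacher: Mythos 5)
Your reduction is exactly the paper's: by Theorem \ref{letheoreme1} the level sets $h^{-1}(c)$ are holonomic isoparametric submanifolds of codimension $2$, so the corollary rests entirely on classifying those. Where you diverge is in how that classification is handled. The paper does not prove it at all: it quotes Cecil--Ryan \cite{CR} and Terng \cite{T} for the fact that a codimension-$2$ isoparametric submanifold with $l=1$ (resp.\ $l=2$) distinct curvature normals is a piece of a sphere or a plane (resp.\ a product of spheres, or of spheres and planes), and quotes Dajczer--Tojeiro \cite{DT} for the fact that the examples with $l\geq 3$ are never holonomic, then reads off the list. You instead attempt a self-contained proof, which is a legitimate and arguably more satisfying route; your explicit exclusion of the all-flat leaf via properness is a point the paper leaves implicit.

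However, as written your argument has a gap exactly where you flag it: the claim that holonomicity forces the distinct nonzero curvature normals to be pairwise orthogonal is asserted, never derived, and it is the entire nontrivial content of the classification (it is what excludes Cartan-type examples such as the tube over the Veronese $\mathbb{RP}^2\subset\mathbb{S}^4$). The quick repair is the paper's own move: cite \cite{CR}, \cite{T}, \cite{DT} for precisely this step. A better repair, using only your own toolkit, is to reverse the order of your steps: do not prove orthogonality of the curvature normals first and then split --- split first. In holonomic (curvature-line) coordinates the metric is diagonal, so $\Gamma^k_{ij}=0$ for distinct $i,j,k$, and Codazzi applied componentwise in a parallel normal frame gives $\eta_{i,j}=(\eta_j-\eta_i)\Gamma^i_{ij}$ (the codimension-$2$ analogue of equation \eqref{codazzi}); constancy of the $\eta_i$ then forces $\Gamma^i_{ij}=0$ whenever $\eta_i\neq\eta_j$, which is exactly autoparallelism of each curvature distribution $E_\eta$, while $II(E_\eta,E_{\eta'})=0$ for $\eta\neq\eta'$ is automatic since $II$ is diagonal in these coordinates. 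Moore's lemma then splits the leaf extrinsically into factors lying in mutually orthogonal affine subspaces; orthogonality of the curvature normals becomes a \emph{consequence} of the splitting rather than an input, and codimension $2$ caps the number of spherical factors at two, so your enumeration stands. (A minor slip: the irreducible rank-$2$ families are not all inhomogeneous --- Cartan's $l=3$ examples are homogeneous; irreducibility, not inhomogeneity, is what must be excluded.)
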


\begin{thm}\label{theoreme2}
Holonomic BHH of $\mathbb{E}^{n+1}$ are minimal. \\
\end{thm}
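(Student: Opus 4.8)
The plan is to deduce Theorem \ref{theoreme2} from Theorem \ref{letheoreme1}. The first observation is that a biharmonic hypersurface is automatically biconservative: splitting the biharmonic equation $\Delta H = 0$ into its tangential and normal parts, the vanishing of the whole vector forces in particular $(\Delta H)^{\top} = 0$. Hence a holonomic BHH is a holonomic BCH, and if it were not minimal it would be a \emph{proper} holonomic BCH, to which the structure theory of Theorem \ref{letheoreme1} applies. So assume, for contradiction, that $\Sigma^n \subset \mathbb{E}^{n+1}$ is a holonomic BHH that is not minimal, and let $h$ denote its mean curvature function. I would split the argument according to whether $h$ is locally constant.

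If $h$ is constant, then $\Sigma^n$ is a CMC biharmonic hypersurface. Every derivative of $h$ vanishes, so the normal component of $\Delta H = 0$ collapses to the pointwise identity $h\,|A|^2 = 0$, where $A$ is the shape operator. Since $|A|^2 = \sum_i \kappa_i^2 \ge \tfrac{1}{n}\big(\sum_i \kappa_i\big)^2 = n\,h^2$ by Cauchy--Schwarz on the principal curvatures, $h \neq 0$ would force $|A|^2 > 0$ and contradict $h|A|^2 = 0$. Thus $h \equiv 0$ and $\Sigma^n$ is minimal, contrary to assumption.

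If $h$ is non-constant, fix a regular point of $h$ and work on a neighbourhood where $\nabla h \neq 0$. By Theorem \ref{letheoreme1}, $\Sigma^n$ is there foliated by the isoparametric codimension-$2$ level sets of $h$, and in the associated holonomic (principal-curvature) coordinates $h$ is a function of the arclength parameter $s$ along the integral curves of $\nabla h /|\nabla h|$ alone. The tangential (biconservative) equation moreover forces $\nabla h$ to be a principal direction, with the corresponding principal curvature equal to a fixed multiple of $h$; the remaining principal curvatures are those carried by the isoparametric leaves and evolve along $s$ according to the second-order ODE of Theorem \ref{letheoreme1}. I would then impose the one relation not yet used, namely the normal component of $\Delta H = 0$, which in the paper's convention reads $\Delta h = h\,|A|^2$. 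Since $h = h(s)$, its Laplacian is $\Delta h = h'' + \phi(s)\,h'$, where $\phi$ collects the logarithmic derivatives of the metric coefficients (equivalently, the mean curvature of the leaves inside the ambient slices), and $|A|^2$ is the sum of squares of the principal curvatures; both are explicit functions of $s$ through the evolution data of Theorem \ref{letheoreme1}.

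The heart of the proof is to show that this last equation is incompatible with $h' \neq 0$. Using the eigenvalue relation from the tangential equation to eliminate the $\nabla h$-principal curvature, and the Codazzi equations for holonomic hypersurfaces to express how the leaf principal curvatures vary along $\nabla h$, the normal equation becomes a scalar second-order ODE in $s$ that the \emph{same} profile $h(s)$ must satisfy simultaneously with the BCH evolution ODE of Theorem \ref{letheoreme1}. Eliminating $h''$ between the two relations yields an algebraic identity relating $h$ and the constant principal curvatures of the leaves; on an interval this can hold only if $h' \equiv 0$, contradicting the choice of a regular point. I expect the genuine obstacle to be precisely this compatibility computation: one must track, via Codazzi, the dependence of $|A|^2$ and of the metric coefficient $\phi$ on $s$ through the normal evolution, and then verify that the over-determined system degenerates to $h' = 0$. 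Combining the two cases, a holonomic BHH cannot be proper, i.e. it is minimal.
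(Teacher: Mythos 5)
Your reduction of BHH to BCH and your treatment of the constant-$h$ case are correct and agree in substance with the paper (Lemma \ref{para1} disposes of CMC biharmonic hypersurfaces in exactly this way). The gap is in the non-constant case, and it is not a small one: the assertion that eliminating $h''$ between the normal equation and the BCH evolution equation ``yields an algebraic identity \ldots [which] on an interval can hold only if $h'\equiv 0$'' is precisely the content of the theorem, and you neither carry out the elimination nor explain why the resulting identity forces degeneracy --- indeed you flag this computation yourself as the expected obstacle. The structure of the system is also mis-stated: along an integral curve one has $h=-\tfrac{2}{n}\lambda_n$ with $\lambda_n=\alpha''/(1+\alpha'^2)^{3/2}$ by \eqref{lambda2}, so the normal equation $\Delta h=|A|^2h$ is an ODE of order four in the profile $\alpha$, while the biconservative equation \eqref{courbeIntegrale} is of order two. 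One cannot simply ``eliminate $h''$'' between two second-order relations for the same unknown; one must differentiate \eqref{courbeIntegrale} twice, substitute into the normal equation, and then prove that the resulting algebraic relation among $x_n$, $\alpha$, $\alpha'$ and the leaf data $\lambda_{0i},\mu_{0i}$ admits no non-trivial solution. That analysis is the heart of the theorem and is nowhere in your proposal.

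For comparison, the paper closes this step by a different route that avoids the general compatibility computation altogether: by Theorem \ref{letheoreme1} the level sets of $h$ are \emph{holonomic} isoparametric codimension-$2$ submanifolds, and by the classification of such submanifolds (see \cite{CR}, \cite{T}, \cite{DT}) the number $l$ of distinct principal curvatures satisfies $l\leq 2$, since for $l\geq 3$ the submanifold is never holonomic. Hence $U_0$ is a piece of a sphere, a product of spheres, or a product of spheres and planes, so $\Sigma^n$ belongs to the explicit $SO(p)$- or $SO(p)\times SO(q)$-invariant families of \cite{MOR}, and for those families the incompatibility of \eqref{courbeIntegrale} with the normal equation in \eqref{System3} was already proved in \cite{MOR}. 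To repair your argument you must either invoke this classification-plus-reduction (which is what makes the holonomic hypothesis do real work), or genuinely perform the fourth-order compatibility analysis; as written, the decisive step is missing.
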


%%%
 \section{Generalities on ${m}$-harmonic maps}
  \subsection{Notations} 

Since our results are of local nature, we can identify isometrically $\Sigma^n$ with a local open subset  $\Omega$ of $\mathbb{R}^{n}$. 
\iffalse
If 
 $X: \Sigma ^n\longrightarrow \mathbb{E}^{n+1}$ is an isometric immersion, we can consider $X$ as $X: \Omega \longrightarrow \mathbb{E}^{n+1}$, where $\Omega$ is endowed with the pullback
\fi
 Let  
 $X:\Omega \subset \rn \longrightarrow \Sigma^n\subset \mathbb{E}^{n+1}$ be  a smooth  immersion 
 where  the image $\Sigma^n$ is an  open   hypersurface   of $\mathbb{E}^{n+1}$ and $\Omega$ is  provided with 
 the pull-back metric.\\
Let   $\Delta_0 $ denote the flat Laplacian on $\Omega$ : $\Delta_0 = \sum_{i=1}^n \partial^2_{x_i}$. \\
 By definition a function $f$ defined on  $\Omega$ is harmonic if  
  $\Delta_0 f= 0,$ and  it is $m$-harmonic if $\Delta_0^m f=0,$ where
 $$\Delta_0^m := \overbrace{\Delta_0\circ\Delta_0\circ \cdots \circ \Delta_0}^m.$$
More generally,  a  map $X$ is  harmonic if $\Delta_0 X= 0$, i.e. $\Delta_0 X^i = 0$, for all $ i=1,\ldots,  n+1 $. Thus, $X$ is harmonic if and only if 
all the  coordinate functions 
that define  $X$ are harmonic. 
By extension, $X$  is $m$-harmonic if and only if each coordinate function of the position vector $X$ is  $m$-harmonic.

\begin{ex} When $\Sigma^n$ is a graph of a function $u$  over a hyperplane $\mathbb{R}^n$ in $\mathbb{R}^{n+1}$  given by the parametrization  
$$X: (x_1,\ldots,x_n) \mapsto (x_1,\ldots, x_n, u),
$$
 $\Delta_0 X=0$ if and only if the following $n+1$ linear PDEs are satisfied 
$$
\Delta_0 x_1=0,\ldots, \Delta_0 x_n=0 \quad \text{and} \quad \Delta_0 u =0.
$$
The first n-equations are always  satisfied hence $X$ is harmonic   if and only if  $u$ is a harmonic function.
 These remarks extend to the $m$-harmonic case: $X$ is $m$-harmonic  if and only if  $\Delta_0^m u =0.$ \end{ex}
Suppose now that  $X=(X^1,\ldots, X^{n+1})$  is an isometric immersion.
Let    $g= g_{ij}$ be the induced metric on $\Omega$ and $|g|$ its determinant. The Laplacian of $X$ for this metric, which is  denoted by $\Delta$, 
is equal to the vector-valued function
 $$(\Delta X)^i=\Delta X^i = Tr(Hess(X^i)) \quad \forall i =1,\ldots, n+1.$$
 By definition  $X$ is  then   an $m$-harmonic isometric immersion  if and only if  $\Delta^m X =0$. 
Equivalently, we say that  $\Sigma^n:=X(\Omega)$ is  an $m$-harmonic hypersurface of $\mathbb{R}^{n+1}$.
\\
Let us recall  that  the  Laplacian in terms of the metric $"g"$ of a smooth function $f:\Omega \longrightarrow \mathbb{R}$,
using   Einstein's summation,  is given by 
\begin{equation}\label{lap}
 \Delta f:=  \frac{1}{\sqrt{|g|}} \left(\sqrt{|g|}g^{ij}f_{,i}\right)_{,j}  =0, \quad i,j = 1,\ldots, n,
 \end{equation}
  where $f_{,i}$ denotes the derivative of $f$ with respect  to each coordinate $x_i$, for all $i=1,\ldots, n$.
   \subsection{ Remarks on the equation of a biharmonic graph} 
 Let us find the PDE equation of a biharmonic graph. Before that, let us recall  the $1$-harmonic graph equation.
 Consider, as in the  former example, the parametrization
$$X: (x_1,\ldots,x_n) \mapsto (x_1,\ldots,x_n,u)$$
 hence, $X_{,i}:= \frac{\partial X}{\partial  x_i}= (0,\ldots,0, 1,\ldots,0,u_{,i}).$ 
The induced metric of a graph on the domain  of $X$ is then given by a matrix whose coefficients are in terms of the  derivatives of $u$ :
$$g_{ij} = \delta_{ij} +u_{,i}u_{,j}\quad {\rm and} \quad  g^{ij} =  \delta_{ij} -\frac{u_{,i}u_{,j}}{W^2},$$
where $(g^{ij}) $ is the inverse matrix of $(g_{ij})$ and 
$W:=\sqrt{1+|\nabla u |^2} = \sqrt{|g|}$.
The second fundamental  form and the shape operator of $X$ are  respectively given by the following  matrices : 
$$II_{ij} = \frac{u_{,ij}}{W}\quad {\rm and}\quad A_{ij} = ^t g^{-1}II =\frac{u_{,ij}}{W}-\sum_{\alpha =1}^n\frac{u_{,i\alpha} u_{,\alpha} u_{,j}}{W^3},$$
where $i,j=1,\ldots, n$.
\\
For example, in dimension $2$, the metric and the second fundamental form are
\begin{equation}
g =\left( 
\begin{array}{cc}
1+u_{,x}^2 & u_{,x}u_{,y} \\
u_{,x}u_{,y} & 1+u_{,y}^2
\end{array}
\right) \quad {\rm and}\quad
II =\frac{1}{W}\left( 
\begin{array}{cc}
u_{,xx} & u_{,xy} \\
u_{,xy} & u_{,yy}
\end{array}
\right).
\end{equation}\\
The mean curvature vector field of the graph 
$$
H= \frac{\Delta X}{n}=\frac{1}{n} (\Delta x_1,\ldots , \Delta x_n, \Delta u)
$$
decomposes into a horizontal part: $nH_{hor} = (\Delta x_1,\ldots ,\Delta x_n, 0)$  and a vertical part:
$nH_{ver}=(0,\ldots , 0 , \Delta u)$.
A harmonic graph satisfies   $H_{hor}=0$  and $H_{ver}=0$. Using equation \eqref{lap},  for $H_{hor}=0$ and  $H_{ver}=0$, 
 we obtain, using Einstein's summation,    the following order two quasi-linear PDEs:
\begin{equation}\label{lap2}  
\left\{
\begin{array}{ll}
nH_{hor}^l =\frac{1}{\sqrt{|g|}} \left(\sqrt{|g|}g^{kl}\right)_{,k}  =0,\quad  l=1,\ldots, n \\
nH_{ver}^{n+1} =\Delta u = g^{kl} u_{,kl} =0.
\end{array}
\right.
\end{equation}
If we substitute  the value of $g^{ij}$ in terms of $u$ in the second equation we obtain the so-called minimal graph equation 
 which can also be expressed as :
$$\left(\frac{u_{,i}}{W}\right)_{,i}=0= {\rm div}\left( \frac{\nabla u}{\sqrt{1+|\nabla u|^2}}\right).$$
For example, in dimension $2$
$$ (1+u_{,y}^2)u_{,xx} -2u_{,x}u_{,y} u_{,xy} +(1+u_{,x}^2)u_{,yy}=0.$$
In other terms a minimal surface is the image of an isometric immersion which is harmonic.\\
We similarly proceed  for  biharmonic graphs. 
 First, we deduce from \eqref{lap}  that the bilaplacian  applied to a function $f$ is:
\begin{equation}\label{lap3}
\begin{array}{c}
\Delta^2 f =g^{ij}\left( g^{kl} f_{,kl} + \frac{1}{\sqrt{|g|}} \left(\sqrt{|g|}g^{kl}\right)_{,k} f_{,l} \right)_{,ij} + \\
 \frac{1}{\sqrt{|g|}} \left(\sqrt{|g|}g^{ij}\right)_{,i} \left( g^{kl} f_{,kl} + \frac{1}{\sqrt{|g|}} \left(\sqrt{|g|}g^{kl}\right)_{,k} f_{,l} \right)_{,j}.
 \end{array}
\end{equation}
The biharmonic equation $\Delta^2X=\Delta H=0$ decomposes 
into a horizontal part:  $n(\Delta H)_{hor} = (\Delta^2 x_1,\ldots ,\Delta^2 x_n, 0)$  and a vertical part:
$n(\Delta H)_{ver} =(0,\ldots , 0 , \Delta^2 u)$. A straightforward computation yields the following  fourth order PDEs:
 \begin{equation}\label{lap5}
n(\Delta H)^m_{hor}  =g^{ij}\left(    \frac{1}{\sqrt{|g|}} \left(\sqrt{|g|}g^{km}\right)_{,k}  \right)_{,ij} +
 \frac{1}{\sqrt{|g|}} \left(\sqrt{|g|}g^{ij}\right)_{,i} \left(    \frac{1}{\sqrt{|g|}} \left(\sqrt{|g|}g^{km}\right)_{,k}   \right)_{,j}=0
\end{equation}
for $m = 1,\ldots, n$.
 Using \eqref{lap5} we obtain
 \begin{equation}
\begin{array}{c}
n(\Delta H)_{ver}^{n+1} =  g^{ij}\left( (g^{kl} u_{,kl})_{,ij} + \frac{1}{\sqrt{|g|}} \left(\sqrt{|g|}g^{kl}\right)_{,k} u_{,lij} \right)+ \\
2g^{ij}\left(  \frac{1}{\sqrt{|g|}} \left(\sqrt{|g|}g^{kl}\right)_{,ik} u_{,lj} \right)+ \\
%g^{ij}\left( \frac{1}{\sqrt{|g|}} \left(\sqrt{|g|}g^{kl}\right)_{,jk} u_{,li} \right)+ \\
 \frac{1}{\sqrt{|g|}} \left(\sqrt{|g|}g^{ij}\right)_{,i} \left( g^{kl} u_{,klj} +\frac{1}{\sqrt{|g|}} \left(\sqrt{|g|}g^{kl}\right)_{,k} u_{,jl} \right)=0,
 \end{array}
\end{equation}
which is a semi-linear PDE  of order $4$ (its highest order term being  $g^{ij}g^{kl} u_{,klij}$). 
 %and  more generally   the highest order term for the m-harmonic equation-of order $2m$- is 
%$g^{i_1i_2}g^{i_3i_4}\cdots g^{i_{2m-1}i_{2m}} u_{i_1i_2\cdots i_{2m}}$

\section{     Biharmonic Hypersurfaces in $\mathbb{E}^{n+1}$}
Let us determine the PDEs that characterize  biharmonic hypersurfaces in $\mathbb{E}^{n+1}$
(cf. for example  \cite{C1} for more general cases).\\
 Let $X$ be any  isometric immersion :
  $$X: \Omega \subset \mathbb{R}^n \longrightarrow X(\Omega)= \Sigma^n \subset \mathbb{E}^{n+1}.$$
  We denote the mean curvature  vector field by $H$, the mean curvature function by $h$, the unit normal vector field by $N$, the shape operator by $A$, and the Laplacian operator by $\Delta$.  \\
It is easy to show that the  position vector field $X$ and the mean curvature vector  field $H$, where $H=hN$,  are related by the following Beltrami equation 
\begin{equation}\label{eq1}
\Delta X =nH.
\end{equation}
One can similarly show that 
\begin{equation}\label{eq 2}
\Delta N  =-|A|^2N - \nabla Tr A,
\end{equation}
where $Tr A= nh$. This equation decomposes the Laplacian of the unit normal vector field with respect to its  normal component ($|A|^2N$) and its tangential component ($\nabla Tr A$).
\\
  $X$ is biharmonic if and only if  
  $$
  \Delta H =\Delta	(hN) =0,
  $$ thus 
  \begin{equation}\label{Laplacian N}
    (\Delta h) N + h \Delta N + 2 \sum_{i=1}^n\nabla_{e_i} h\nabla_{e_i} N =  (\Delta h) N + h \Delta N + 2 \nabla_{\nabla h} N = 0.
  \end{equation}
 Now, from equations
  \eqref{eq1}, \eqref{eq 2} and \eqref{Laplacian N} we deduce that $\Sigma^n$ is biharmonic if and only if  it  satisfies the  following two  semi-linear PDEs of 
  orders $3$ and $4$, respectively:
  \begin{equation}\label{System3}
\left\{
\begin{array}{cc} 
 (\Delta H)^\top := 2 \nabla_{\nabla h} N -nh\nabla h   &=0\\
 (\Delta H)^\perp :=  \Delta  h -|A|^2 h  &=0.\\
\end{array}
\right.\end{equation}
 %  \subsection{bi-harmonic (isometric) maps}
 The solutions of the  first equation ($(\Delta H)^\top=0$)   correspond
to  critical points  of the bienergy functional for  compact deformations that are tangent  to $\Sigma^n$. 
 \begin{notation}
A hypersurface satisfying $(\Delta H)^\top=0$  will be denoted by BCH which stands for "Biconservative Hypersurface",  and a hypersurface satisfying  both equations in \eqref{System3} will be denoted by BHH  which stands for "Biharmonic  Hypersurface".
\end{notation}
 \begin{notation}If $\Sigma^n$ has  constant mean curvature (CMC), then $\Sigma^n$ is a BCH. Furthermore, a CMC hypersurface satisfies the  normal equation in \eqref{System3} if and only if  $h=0$. Thus, any minimal hypersurface $\Sigma^n$ is a BHH. We will later  denote by {\bf proper BCH }
 (resp. {\bf proper BHH}) a BCH which is not CMC  \cite{MOR} (resp. a BHH which is not minimal  \cite{MO}).
\end{notation}
   \subsection{Biconservative  equation in a principal curvature frame}
  At a point $p\in \Sigma^n$, we have $s$ distinct eigenvalues and $s$ corresponding eigenspaces of the shape operator $A_N$. This defines $s$ plane distributions $T_i,$ where 
   $T_i = ker(A_N(p)-\lambda_i Id)$ and  $\lambda_i $ 
   is the corresponding eigenvalue, or principal curvature, of multiplicity $rk(T_i)=m_i$
    such that $T_p\Sigma^n  = \oplus_{i=1}^s T_i$, where  $\sum_{i=1}^s m_i = n$.
  We will select  an open subdomain  where  the multiplicities $m_i,\   i=1,\ldots, s,$ of the corresponding eigenvalues  are constant. 
We will obtain  a splitting of the tangent bundle $T\Sigma^n$  into  a family of integrable distributions
  $T_i\subset T\Sigma^n $ . 
\\
  We now express the biharmonicity  in terms of a principal curvature frame.

    \begin{prop}\label{biTan}
   Let $\{e_i\}_{i=1}^n$ be a local orthonormal frame field of principal directions on $\Sigma ^n$. Then the  BCH equation 
   $$
   2 \nabla_{\nabla h} N -nh\nabla h=0
   $$
is equivalent to the following system of   of  algebraic-differential equations:
  \begin{equation}\label{leSysteme}  
\nabla_{e_i}h  = 0  \quad {\rm or}\quad n h + 2 \lambda_i = 0,  \quad i =1,\ldots,n
% \lambda_{i,j} +  \frac{E_{i,j}}{2E_i}\left( 3 \lambda_i +   \lambda_j \right) &= 0 \quad & i,j = 1,\cdots, n-1, i\neq j  \\
\end{equation}
where   $\lambda_i ,i =1,\ldots,n$ are the principal curvatures. 
    \end{prop}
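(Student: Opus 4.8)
The plan is to evaluate the single vector equation $2\nabla_{\nabla h}N - nh\,\nabla h = 0$ coefficient by coefficient in the given orthonormal principal frame $\{e_i\}_{i=1}^n$, and then to read off the stated dichotomy from the linear independence of the $e_i$. The only structural ingredient needed is the Weingarten relation: since $N$ is a unit normal field, $\langle N,N\rangle = 1$ forces its derivative in any tangent direction to be again tangent, so that $\nabla_X N = -AX$ for every tangent $X$; in particular, in a principal frame one has $\nabla_{e_i}N = -\lambda_i e_i$. This is also exactly the tangential content already encoded in \eqref{eq 2} and \eqref{Laplacian N}.

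First I would expand the gradient in the frame, writing $\nabla h = \sum_{i=1}^n (\nabla_{e_i}h)\,e_i$, where $\nabla_{e_i}h = e_i(h)$ is the directional derivative. Feeding this into the first slot of $\nabla_\bullet N$ and using linearity of the connection together with the Weingarten relation gives
\begin{equation*}
\nabla_{\nabla h}N = \sum_{i=1}^n (\nabla_{e_i}h)\,\nabla_{e_i}N = -\sum_{i=1}^n \lambda_i (\nabla_{e_i}h)\,e_i = -A(\nabla h).
\end{equation*}

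Substituting this, along with $\nabla h = \sum_i (\nabla_{e_i}h)\,e_i$, into the BCH equation yields
\begin{equation*}
2\nabla_{\nabla h}N - nh\,\nabla h = -\sum_{i=1}^n \bigl(2\lambda_i + nh\bigr)(\nabla_{e_i}h)\,e_i = 0.
\end{equation*}
Because $\{e_i\}_{i=1}^n$ is orthonormal, hence linearly independent, this single vector identity is equivalent to the vanishing of each scalar coefficient, namely $(2\lambda_i + nh)(\nabla_{e_i}h) = 0$ for every $i$. For each fixed index this product vanishes precisely when $\nabla_{e_i}h = 0$ or $nh + 2\lambda_i = 0$, which is exactly the system \eqref{leSysteme}.

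I do not expect any genuine obstacle: the statement is essentially a bookkeeping consequence of the Weingarten formula in an eigenbasis of the shape operator. The only point demanding a little care is the justification that $\nabla_{\nabla h}N$ may be computed as $-A(\nabla h)$ — that is, that the connection appearing in \eqref{Laplacian N} acts on the unit normal exactly through the shape operator, with no surviving normal correction term. This follows at once from $\langle N,N\rangle = 1$, which upon differentiation gives $\langle \nabla_X N, N\rangle = 0$, so $\nabla_X N$ is tangential for all $X$ and the identification with $-AX$ is legitimate.
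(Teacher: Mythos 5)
Your proof is correct and follows essentially the same route as the paper: expand $\nabla h$ in the principal frame, apply the Weingarten relation $\nabla_{e_i}N=-\lambda_i e_i$, and conclude from linear independence of the $e_i$ that each coefficient $(2\lambda_i+nh)(\nabla_{e_i}h)$ must vanish. Your write-up is in fact slightly more careful than the paper's (which contains a harmless spurious factor of $2$ in its displayed computation and only notes in passing the passage from the pointwise product condition to the alternative on a subdomain).
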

   %%%%%%%

\begin{proof}
Let $\{e_i\}_{i=1}^n$ be a local orthonormal frame field  of   $\Sigma^n$  where   $e_i$ is 
 the principal direction of principal curvature $\lambda_i$,  $i =1,\ldots,n$. Then
   $$2 \nabla_{(\nabla_{e_i} h) e_i} N -nh(\nabla_{e_i} h)e_i = -2\nabla_{e_i}h\left(2\lambda_i   + nh    \right) e_i =0.$$
By continuity and by reducing the domain, the system given in (\ref{leSysteme}) holds,
thus we conclude.
      \end{proof}
     \begin{defn}\label{class}
Let  $\Sigma^n$  be a BCH in $\mathbb{E}^{n+1}$. $\Sigma^n$ is  of rank  $k$  if $rk( T\Sigma^n({-\frac{nh}{2}}) )=k,$ 
where $T\Sigma^n({-\frac{nh}{2}})$ is the eigenspace of 
 the shape operator of eigenvalue ${-\frac{nh}{2}}$ in $T\Sigma^n$.
\end{defn}
 $\nabla h$ is null  when projected onto  $T\Sigma^n({-\frac{nh}{2}}) ^\perp$.
As we will see later, the rank of $\Sigma^n$ is either $0$ or $1$.

\section{Reducible biconservative and biharmonic hypersurfaces}\label{reducible}
There is a simple automatic way to construct  BCH (or BHH)  from 
BCH (BHH)  submanifolds of lower dimensions by simply constructing cylinders with  BCH (or BHH) basis.
\begin{prop}
If $\Sigma^n\subset \mathbb{E}^{n+1}$ is a BCH  (resp. a BHH) of rank $k$ 
then the Riemann product $\tilde\Sigma:= \Sigma^n\times \mathbb{E}^l \subset \mathbb{E}^{n+l+1}$
 is a BCH (resp. a BHH) of rank  $k$.
\end{prop}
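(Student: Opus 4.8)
The plan is to realize $\tilde\Sigma$ as a cylinder and compare its geometric data with that of $\Sigma^n$ in a principal frame, so that the characterizations in Proposition \ref{biTan} and System \eqref{System3} transfer directly. Writing $\tilde X(p,y)=(X(p),y)$ for $p\in\Omega$ and $y\in\mathbb{E}^l$, the induced metric on $\tilde\Sigma$ is the product metric, the unit normal is $\tilde N=(N,0)$ (the $\mathbb{E}^l$ directions being tangent to $\tilde\Sigma$), and the shape operator $\tilde A$ is block diagonal: it equals $A$ on the factor $T\Sigma^n$ and vanishes on $T\mathbb{E}^l$. Hence, if $\{e_i\}_{i=1}^n$ is a principal frame on $\Sigma^n$ with principal curvatures $\lambda_i$ and $\{f_j\}_{j=1}^l$ is the standard frame of $\mathbb{E}^l$, then $\{(e_i,0)\}_{i=1}^n\cup\{(0,f_j)\}_{j=1}^l$ is a principal frame on $\tilde\Sigma$ with principal curvatures $\lambda_1,\dots,\lambda_n,0,\dots,0$.

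First I would record the mean curvature relation. Since $Tr\,\tilde A=\sum_{i=1}^n\lambda_i=nh$ while $\dim\tilde\Sigma=n+l$, the mean curvature function of $\tilde\Sigma$ is $\tilde h=\tfrac{n}{n+l}\,h$; in particular $\tilde h$ depends only on the $\Omega$-coordinates, so $\nabla\tilde h=\tfrac{n}{n+l}\nabla h$ is tangent to the $\Sigma^n$ factor and its directional derivatives along the flat directions $(0,f_j)$ vanish. Crucially, $(n+l)\tilde h=nh$, so the distinguished eigenvalue satisfies $-\tfrac{(n+l)\tilde h}{2}=-\tfrac{nh}{2}$.

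Next I would apply Proposition \ref{biTan} to the hypersurface $\tilde\Sigma\subset\mathbb{E}^{n+l+1}$. Along each flat direction the condition $\nabla_{(0,f_j)}\tilde h=0$ holds automatically, so those equations of the system are satisfied. Along each $(e_i,0)$ the two alternatives read $\nabla_{e_i}\tilde h=0$ or $(n+l)\tilde h+2\lambda_i=0$, which by the previous paragraph are literally $\nabla_{e_i}h=0$ or $nh+2\lambda_i=0$, i.e.\ exactly System \eqref{leSysteme} for $\Sigma^n$. Thus $\tilde\Sigma$ is a BCH precisely when $\Sigma^n$ is. For the BHH case I would additionally check the normal equation in \eqref{System3}: on a Riemannian product the Laplacian splits, so $\tilde\Delta\tilde h=\tfrac{n}{n+l}\,\Delta h$, while $|\tilde A|^2=\sum_{i=1}^n\lambda_i^2=|A|^2$; hence $\tilde\Delta\tilde h-|\tilde A|^2\tilde h=\tfrac{n}{n+l}\bigl(\Delta h-|A|^2h\bigr)$, which vanishes if and only if the normal equation holds on $\Sigma^n$.

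Finally, for the rank I would compare the eigenspaces of $\tilde A$ and $A$ at the eigenvalue $-\tfrac{nh}{2}=-\tfrac{(n+l)\tilde h}{2}$ appearing in Definition \ref{class}. Because $\tilde A$ is $A$ augmented by a zero block, its $\bigl(-\tfrac{nh}{2}\bigr)$-eigenspace is the $\bigl(-\tfrac{nh}{2}\bigr)$-eigenspace of $A$ together with the $l$ flat directions \emph{only} in the degenerate case $-\tfrac{nh}{2}=0$. Away from the zero set of $h$ the flat block contributes nothing to this eigenspace, so $rk\,T\tilde\Sigma\bigl(-\tfrac{(n+l)\tilde h}{2}\bigr)=rk\,T\Sigma^n\bigl(-\tfrac{nh}{2}\bigr)=k$, as required. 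The main obstacle is precisely this bookkeeping at points where $h=0$: there the $l$ flat directions (eigenvalue $0$) merge into the distinguished eigenspace and would spuriously raise the rank, so the identification of ranks must be understood on the locus $h\ne0$, which is open and dense for a proper BCH since $\nabla h\ne0$ there; everywhere else the argument is a direct transcription.
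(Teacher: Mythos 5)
Your proof is correct and takes essentially the same route as the paper's own: the mean curvature relation $\tilde h=\frac{n}{n+l}h$, the consequent identity $-\frac{(n+l)\tilde h}{2}=-\frac{nh}{2}$, Proposition \ref{biTan} for biconservativity, and $|\tilde A|^2=|A|^2$ (with the splitting of the Laplacian on the product) for the biharmonic case. Your explicit bookkeeping of the rank at points where $h=0$ just spells out what the paper handles by its opening case split ($h\equiv 0$, hence minimal, versus $h\neq 0$).
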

\begin{proof} 
If $h=0$ then $\Sigma^n$ is minimal and so is $\Sigma^n\times \mathbb{E}^l$.
Suppose that $h\neq 0$, then the Riemann product $\tilde \Sigma$ has a  mean curvature 
$\tilde h = \frac{n}{n+l}h $ and
$rk(  T\tilde\Sigma ({-\frac{(n+l)\tilde h}{2}})) = rk( T\tilde\Sigma (-\frac{n h}{2}))=rk( T\Sigma^n (-\frac{nh}{2}))=k. $
 Furthermore, $\nabla h$ is null on $T\Sigma^n(-\frac{n h}{2})^\perp \oplus  \mathbb{E}^l$.
Hence, it is biconservative  from Proposition \ref{biTan} and of rank $k$.
If $\Sigma^n$ is a BHH, then the  second equation of  the system \label{System2} holds also for $\tilde\Sigma$ since $|A|^2 = |\tilde A|^2$. 
\end{proof}
BCH (resp. BHH) which are Riemannian products  will be called {\sl reducible}. Thus, from now on we will mainly   investigate irreducible BCH.
\section{Geometry   of proper  holonomic  BCH in $\mathbb{E}^{n+1}$}
BCH are locally solutions of 
 the system of  algebraic-differential equations which is given by Proposition \ref{biTan}. In fact,  we will show that   BCH  are either  of rank (i.e. CMC) or of rank $1$. In the latter case, a BCH is foliated by level hypersurfaces defined by  the mean curvature function $h$. We  will then study the extrinsic geometry of these leaves and that of the integral curves which are
 orthogonal to the leaves.\\
 Notice that Lemmas \ref{para1}, \ref{para3} and \ref{para4}, are similar 
 to Lemmas 2.2, 2.3 and 2.5 of \cite{HV}, but our proofs are different and based on the holonomic condition.
This condition in turn allows us  to prove  lemmas \ref{paraODE} and \ref{para5} which were not known. Also, the fact that  $rk(\lambda_n)=1$,
in Lemma  \ref{para1},
was already proven in \cite{D}.
   \subsection{  Foliation of  a BCH by the mean curvature function}

\begin{lem}\label{para1} 
A BCH  $\Sigma^n\subset \mathbb{E}^{n+1}$ is either any CMC,  i.e.  of rank $0$, or it is a proper BCH of rank  $1$. 
In the later case,  $\Sigma^n$ is 
in a neighbourhood of regular points of $h$,  foliated by  leaves, $U_t = h^{-1}(t)$,  which are the level hypersurfaces 
 of  the mean curvature function $h$  on $\Sigma^n$.
These $U_t$ are, locally, codimension 2 submanifolds of 
  $\mathbb{E}^{n+1}$  with  flat   normal bundle. The orthogonal curves to $U_t$, which are  integral curves of the vector field 
  $\nabla h$ on $\Sigma^n$, are curvature lines of principal curvature $\lambda_n = -\frac{nh}{2}$. Any other 
  principal curvature  satisfies $\lambda_i \neq -\frac{nh}{2},$ $  i =1,\ldots, n-1 $.
   \end{lem}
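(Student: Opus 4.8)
The plan is to extract all the required structure from the pointwise dichotomy of Proposition \ref{biTan} together with the Codazzi equations of the hypersurface, working throughout in a local orthonormal principal frame $\{e_i\}$ on a domain where the multiplicities of the $\lambda_i$ are constant.

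First I would reformulate Proposition \ref{biTan}: writing $\nabla h=\sum_i (e_i h)\,e_i$ in the principal frame, the alternative ``$e_i h=0$ or $\lambda_i=-\tfrac{nh}{2}$'' shows that every principal direction along which $h$ varies is an eigendirection for the eigenvalue $-\tfrac{nh}{2}$. Hence $\nabla h$ lies in the eigenspace $E:=T\Sigma^n(-\tfrac{nh}{2})$, and conversely every direction in $E^{\perp}$ annihilates $h$. If $h$ is constant we are in the CMC (rank $0$) case; otherwise, at a regular point $\nabla h\neq 0$ forces $E\neq\{0\}$, so the rank is at least $1$.

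The heart of the matter, and the step I expect to be the main obstacle, is to show the rank cannot exceed $1$. Here I would invoke the Codazzi equation, which in a principal orthonormal frame gives $e_i(\lambda_j)=(\lambda_i-\lambda_j)\,\langle \nabla_{e_j}e_i,e_j\rangle$ for $i\neq j$. Suppose $\dim E\ge 2$; choose $e_n=\nabla h/|\nabla h|\in E$ and a unit field $e_b\in E$ orthogonal to $e_n$ (possible after reducing to the domain of constant multiplicities). Both have eigenvalue $\lambda_n=\lambda_b=-\tfrac{nh}{2}$, so the Codazzi relation yields $e_n(\lambda_b)=(\lambda_n-\lambda_b)\langle \nabla_{e_b}e_n,e_b\rangle=0$. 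On the other hand $\lambda_b\equiv -\tfrac{nh}{2}$ as a function, whence $e_n(\lambda_b)=-\tfrac{n}{2}\,e_n(h)=-\tfrac{n}{2}|\nabla h|\neq 0$ at a regular point, a contradiction. Thus $\dim E=1$, the rank is exactly $1$, and in particular $\lambda_i\neq-\tfrac{nh}{2}$ for $i=1,\dots,n-1$.

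With rank $1$ in hand, the remaining assertions are essentially bookkeeping. Since $dh\neq 0$ on a neighbourhood of a regular point, the regular value theorem makes the level sets $U_t=h^{-1}(t)$ a smooth codimension-one foliation of $\Sigma^n$, hence codimension-two submanifolds of $\mathbb{E}^{n+1}$; the unit field $e_n=\nabla h/|\nabla h|$ spanning $E$ is a principal direction of principal curvature $-\tfrac{nh}{2}$, so its integral curves, i.e. the orthogonal trajectories of the $U_t$, are curvature lines of eigenvalue $\lambda_n=-\tfrac{nh}{2}$. Finally, to prove that the normal bundle of $U_t$ in $\mathbb{E}^{n+1}$ (spanned by $N$ and $e_n$) is flat, I would exhibit this frame as parallel: for $X\in TU_t$ one has $\nabla^{\mathbb{E}}_X N=-A_N X$, whose components along $N$ and along $e_n$ both vanish, the latter because $\langle A_N X,e_n\rangle=\lambda_n\langle X,e_n\rangle=0$; hence $\nabla^{\perp}_X N=0$, and by metric compatibility $\nabla^{\perp}_X e_n=0$ as well. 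A parallel orthonormal normal frame forces $R^{\perp}=0$, which gives the flat normal bundle and completes the proof.
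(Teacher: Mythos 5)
Your proof is correct and takes essentially the same route as the paper: Proposition \ref{biTan} places $\nabla h$ in the eigenspace $E=T\Sigma^n(-\tfrac{nh}{2})$, a Codazzi argument kills the case $\dim E\geq 2$, and the flat-normal-bundle computation (vanishing of the $e_n$- and $N$-components of $D_X N$ for $X\in TU_t$) is identical to the paper's. The only cosmetic difference is that you inline the Codazzi frame computation $e_n(\lambda_b)=(\lambda_n-\lambda_b)\langle\nabla_{e_b}e_n,e_b\rangle=0$, whereas the paper invokes its Appendix \ref{cristoffel} result that an eigenvalue of a curvature distribution of rank at least $2$ is constant along its leaves --- the same calculation packaged as a lemma.
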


   \begin{proof}
When the rank of the BCH is null,  $h$ is trivially  constant.
Hence, the BCH is CMC  and from the normal equation of  \eqref{System2}
a corresponding BHH 
 automatically satisfies $h=0 $, i.e.,  $\Sigma^n$ is minimal.
 \\ 
We suppose now  that the rank  $k$  of $\Sigma^n$ is at least two.
   $\Sigma^n$ is, in a neighbourhood  of regular points of $h$,  foliated by hypersurfaces of $\Sigma^n$ which are codimension $2$ submanifolds of $\mathbb{E}^{n+1}$, and which are given by
   $$
   U_t := \{p\in \Sigma^n : h(p) = t \} .
   $$
  From Proposition \ref{biTan},
  $T\Sigma^n(-\frac{nh}{2})^\perp \subset TU_t\subset T\Sigma^n$, hence 
  $TU_t^\perp \subset T\Sigma^n(-\frac{nh}{2}) \subset T\Sigma^n$.
  Thus, the normal curves to the foliation on $\Sigma^n$ are  integral curves of 
  $$e_n:= \frac{ \nabla h}{|\nabla h|}$$
and are curvature lines of the principal curvature $\lambda_n = -\frac{nh}{2}$.\\
When the rank $k\geq 2$, the distribution defined by the $k$-planes, $T\Sigma^n(-\frac{nh}{2})$, 
  is integrable   into  a submanifold of dimension $k$ 
that has constant principal curvature $\lambda_n = -\frac{nh}{2}$ in the normal  direction  $N$ (see Appendix  \ref{spheres}). 
In particular  $\lambda_n$ is constant along the $\nabla h$ integral curves of $\nabla h$.
and so is $h$; but $h$ is also constant on the $U_t$, hence $h$ is constant everywhere which contradicts the hypothesis.
Thus, we conclude that  $rk(T\Sigma^n(-\frac{nh}{2}) )=1$.\\
 Let us show that the normal bundle $NU_t\subset \mathbb{E}^{n+1}$  is flat. Along the principal directions $\{e_i\}_{i=1}^{n-1},$ in $TU_t$, we have
$$\langle D_{e_i} N, e_n \rangle = -\langle \lambda_i e_i, e_n \rangle  =0 \quad {\rm and} \quad
\langle D_{e_i} e_n, N\rangle = -\langle  e_n, \nabla_{e_i} N \rangle  =0,$$

where $D$ is the connection  in $\mathbb{E}^{n+1}$.
Hence, the normal bundle is  flat and  its  normal curvature is null and using the Ricci equation we get
$$\langle R(X,Y)\xi , \tau \rangle = \langle [A_\xi,A_\tau ]X,Y \rangle =0,\ \forall X,Y \in TU_t, \ \forall \xi,\tau \in NU_t.$$
So the shape operators $A_N$ and $A_{e_n}$ commute. Hence $A_{e_n}$ 
is diagonalised in the same frame $\{ e_i\}_{i=1}^{n-1}$, i.e. there exist  principal curvatures  $\{\mu_i\}_{i=1}^{n-1} $ in the $e_n$-direction 
 such that 
  \begin{equation} \label{mu}
 \nabla_{e_i} e_n = -\mu_i e_i 
 \end{equation} 
 for all $i = 1, \ldots, n-1$.
\end{proof}

\subsection{Holonomic hypersurfaces of the Euclidean space}\label{holonomic}
For  more information on holonomic coordinates,  we refer to \cite{DT}, and for more details on twisted products we refer 
to \cite{Re} and  \cite{Rec}.
 At each point $p\in \Sigma^n$,  the tangent space $T_p\Sigma^n$ decomposes into  an orthogonal sum 
of the 
 eigenspaces $T_i$ of  the  symmetric shape operator $A(p)$ of respective   distinct eigenvalues   $\lambda_i, \ i =1,\ldots, s$, i.e.  $
 T_p\Sigma^n = \bigoplus_{i=1}^s T_i
 $.  \\
Let us consider a neighborhood  of a  point $p$ where  the planes $\{T_i\}_{i=1}^s$ define  a smooth distribution.
From  Codazzi  equations (cf. Appendix  \ref{cristoffel})  one shows that the distribution $T_i$ is integrable for each $i=1,\ldots, s$.\\
 Then, again, using   Codazzi equations, 
  the leaves of the distribution are totally umbilical. Recall that a submanifold is totally umbilical if 
  the second fundamental form equals $\langle X,Y\rangle H$, 
where $H$ is \textit{de facto} the mean curvature vector field of  the submanifold.
 When $rk(T_i) \geq 2$, the leaves of the distribution $T_i$ are  $rk(T_i)$-spheres  of $\mathbb{E}^{n+1}$ (see for example Appendix  \ref{spheres}).\\
The   set of the   $s$   orthogonal   foliations $\{\mathcal{T}_i\}_{i=1}^s$, obtained by the integration of the $T_i$-distributions, defines an orthogonal net on $\Sigma^n$ which 
is the principal curvature   net (cf.  \cite{Re} and  \cite{Rec}).\\
 %  Hence  the curvature lines set define a twisted product set and we denote by $H_i$ the mean curvature of these 
 % pieces of  spheres $\mathcal{T}_i$ .\\.      
  $\Sigma^n$ is {\sl  holonomic}  
   if locally  $\Sigma^n$ is diffeomorphic to the product manifold $ P:= \prod_{i=1}^s \mathcal{T}_i$ and  there exists 
   an isometric parametrization of $X: \Omega \subset P  \longrightarrow \Sigma^n $ and the pullback  twisted metric $g$,   
which in terms of  elements  $(x_i)_{i=1,\ldots, n} \in P$, can be expressed by the following form   
\begin{equation}\label{metric}
 g = \sum_{i=1}^sv_i^2(x_1,\ldots, x_n) d\sigma_i^2 =  \sum_{i=1}^n \tilde v_i^2(x_1,\ldots, x_n) dx_i^2,
 \end{equation}
  where $d\sigma_i^2 $ are line metrics or  spherical metrics of each $\mathcal{T}_i, \ i=1, \ldots, s.$

\subsection{Integral curves of  $\nabla h$}
We now assume that the BCHs  are holonomic.
\begin{lem}\label{para3} 
Let $\Sigma^n$ be a  holonomic proper  BCH.  Let   $U_{t} $  be a level hypersurface on $\Sigma^n$ of mean curvature function  $h$.
Then $\Sigma^n$ is locally fibered over  $U_{t} $ by the integral curves of $\nabla h$. These fibers are planar lines in $NU_{t}$, i.e. for each $p\in U_{t}$,    $N_pU_{t}\cap \Sigma^n$ is a curvature line of principal curvature 
 $\lambda_n$ and    a  geodesic of $\Sigma^n$. 
% \item The symmetries of $U$ extend to symmetries of $\Sigma^n$
\end{lem}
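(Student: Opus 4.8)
The plan is to work in the local orthonormal principal frame $\{e_i\}_{i=1}^n$ furnished by Lemma \ref{para1}, in which $e_n=\nabla h/|\nabla h|$ is the simple principal direction with $\lambda_n=-\tfrac{nh}{2}$, while $\lambda_j\neq\lambda_n$ for $j<n$, and $T U_t=\mathrm{span}(e_1,\dots,e_{n-1})$. The first step is essentially bookkeeping: near a regular point of $h$ the field $\nabla h$ does not vanish, so $h$ is a submersion onto an interval and its level sets $U_t$ foliate $\Sigma^n$ by codimension-one leaves. The flow of $e_n$ is transverse to this foliation and its orbits are exactly the integral curves of $\nabla h$, which gives the asserted local fibration $\Sigma^n\to U_t$ with the $\nabla h$-curves as fibers. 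It remains to analyse a single fiber $\gamma$, parametrized by arclength so that $\gamma'=e_n$.

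The crucial step is that $\gamma$ is a geodesic of $\Sigma^n$, i.e. $\nabla_{e_n}e_n=0$. I would obtain this from the Codazzi equation $(\nabla_{e_i}A)e_j=(\nabla_{e_j}A)e_i$ of the hypersurface in the flat ambient space. Expanding with $Ae_i=\lambda_i e_i$, taking the inner product with $e_i$ (for $i\neq j$), and using the symmetry of $A$ together with $\langle\nabla_{e_j}e_i,e_i\rangle=0$, one gets $(\lambda_j-\lambda_i)\langle\nabla_{e_i}e_j,e_i\rangle=e_j(\lambda_i)$, hence
\[
\langle \nabla_{e_i} e_i, e_j\rangle=\frac{e_j(\lambda_i)}{\lambda_i-\lambda_j},\qquad i\neq j .
\]
Applying this with $i=n$ and $j<n$, and noting that Proposition \ref{biTan} together with Lemma \ref{para1} forces $\nabla_{e_j}h=0$ for $j<n$ (since $\lambda_j\neq-\tfrac{nh}{2}$), we find $e_j(\lambda_n)=-\tfrac{n}{2}\,e_j(h)=0$, while $\lambda_n-\lambda_j\neq0$. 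Thus $\langle\nabla_{e_n}e_n,e_j\rangle=0$ for all $j<n$, and since $\langle\nabla_{e_n}e_n,e_n\rangle=0$ automatically, we conclude $\nabla_{e_n}e_n=0$, so $\gamma$ is a geodesic.

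The planarity and the identification with $N_pU_t\cap\Sigma^n$ then follow from the Gauss–Weingarten formulas for the flat ambient connection $D$. Using the geodesic property, $D_{e_n}e_n=\nabla_{e_n}e_n+\lambda_n N=\lambda_n N$, while the Weingarten formula gives $D_{e_n}N=-\lambda_n e_n$ because $e_n$ is a principal direction. Both derivatives lie in the $2$-plane $\mathrm{span}(e_n,N)$, so this plane is invariant under $D_{e_n}$; being parallel along $\gamma$ in Euclidean space, it is a fixed affine $2$-plane. Since $T U_t=\mathrm{span}(e_1,\dots,e_{n-1})$, this fixed plane is exactly the normal plane $N_pU_t=\mathrm{span}(e_n,N)$. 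Hence $\gamma$ is a planar curve contained in $p+N_pU_t$; being one-dimensional it is locally all of $N_pU_t\cap\Sigma^n$, and combined with the curvature-line statement of Lemma \ref{para1} this yields the lemma.

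The only real obstacle is the geodesic property $\nabla_{e_n}e_n=0$; everything else is a direct consequence of it together with Lemma \ref{para1}. I expect that delicate point to reduce, as above, to Codazzi plus the biconservative vanishing $\nabla_{e_j}h=0$, the holonomic hypothesis being what guarantees a smooth principal frame in which these computations are legitimate.
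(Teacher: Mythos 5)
Your proof is correct, and it reaches the paper's two key facts (the $\nabla h$-curves are geodesics, hence planar curvature lines in $N_pU_t$) by the same logical skeleton: Codazzi gives the geodesic property, then the Gauss--Weingarten formulas show the plane $\mathrm{span}(e_n,N)$ is parallel along the curve, hence fixed. The difference is in how the geodesic step is executed. The paper works in the holonomic (curvature-line) coordinates: Codazzi in the form $\lambda_{n,i}-(\lambda_i-\lambda_n)\Gamma^n_{ni}=0$ with $\Gamma^n_{ni}=(\log v_n)_{,i}$ yields $v_{n,i}=0$, so the metric splits as $\sum_{i<n} v_i^2\,d\sigma_i^2+d\tilde x_n^2$ after reparametrizing $x_n$, making the $\tilde x_n$-curves geodesics; the paper states explicitly that its proofs of these lemmas are ``based on the holonomic condition.'' You instead run Codazzi invariantly in a smooth orthonormal principal frame, obtaining $\langle\nabla_{e_n}e_n,e_j\rangle=e_j(\lambda_n)/(\lambda_n-\lambda_j)=0$ directly. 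This buys a genuine (if modest) gain in generality: your argument never uses integrability of the principal net, only the existence of a smooth principal frame, which follows from locally constant multiplicities alone --- so this lemma holds for any proper BCH near such points, holonomic or not (this is closer in spirit to the Hasanis--Vlachos arguments the paper cites as the non-holonomic precedent). Your closing remark that the holonomic hypothesis is what ``guarantees a smooth principal frame'' is therefore a slight mischaracterization --- constant multiplicity of the eigenvalues suffices for that, and holonomicity is strictly stronger --- but this does not affect the validity of the proof.
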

\begin{proof}\phantom{x}
Consider $\gamma$ a perpendicular curve to $U_0$, starting at a point $p\in U_0$ 
and denote the unit tangent vector field along the curve by  $T(s)$  which,  at $s=0$, is  $e_n:= \frac{ \nabla h}{|\nabla h|}$.\\
First, the    curves  $\gamma$ are   geodesics of $\Sigma^n$. Indeed,
 by the Codazzi equations and since $\lambda_n$ is constant on $U_0$, we have (cf. Appendix \ref{cristoffel}) : 
$$ ( \lambda_n-\lambda_i)\Gamma_{ni}^i =( \lambda_n-\lambda_i)(\log v_n)_{,i}=0, $$ 
 so, as $\lambda_n \neq \lambda_i$, for all $i=1,\ldots,n-1$
 $$v_{n,i} = 0, \ i = 1,\ldots, n-1.$$
Hence, the coefficient of $dx_n^2$ in the expression of the metric $g$, which is $v_n$, depends only on $x_n$. By a possible change of coordinates along the $x_n$-curvature line the 
metric $g = \sum_{i=1}^{n-1} v_i^2 d\sigma_i^2 + d\tilde x_n^2$. It is then clear that the $\tilde x_n$- curves are geodesics. \\
 Second,  geodesics that are curvature lines must be planar:
$$\frac{D}{ds}(T\wedge N) = \frac{D}{ds}T\wedge N + T\wedge\frac{D}{ds}N.$$
As $\gamma$ is a geodesic of $\Sigma^n$,   $\frac{D}{ds}T =\kappa N$,
and as $ \frac{D}{ds} N=-\lambda_n T,$ we obtain
 \begin{equation}\label{planConstant}
 \frac{D}{ds}(T\wedge N) = 0.
 \end{equation}
 Therefore, the integral curves of $\nabla h$  are planar and contained in the fixed  plane $T_0\wedge N_0$.\\
 % the planar integral curves are determined  by an ODE of order 2: \\
  %$\nabla_T N =\lambda_3 T, \nabla_T T =\kappa N$.
  %Hence the curve  is solution to the ODE 
  %$\nabla_T T = -\lambda_3 N$,
  %$\frac{d^2X}{dx_3^2}=-\lambda_3J\frac{dX}{dx_3}$.
  \end{proof}
\subsection{Parametrization of a  holonomic BCH}

%%%%%%%%%%%
%%%%%%%%%%
%%%%%
 \begin{lem}\label{para4} 
Let $\Sigma^n$ be a holonomic proper BCH.
Let $N_0$ be the unit normal vector field of $\Sigma^n$ restricted to 
 $U_{0}$  and  let  $e_{n} = \frac{\nabla h }{|\nabla h|}$. Then, locally, there exists in a neighborhood of $U_{0}$  a parametrization of  $\Sigma^n$,  in terms of the curvature line coordinates $x_1,\ldots, x_n$, given by 
\begin{equation}\label{surRev}
X(x_1,\ldots,x_n) = Y(x_1,\ldots, x_{n-1}) + \alpha(x_n)N_0(x_1,\ldots,x_{n-1}) + x_n e_{n}(x_1,\ldots, x_{n-1}),
\end{equation}
where $Y$ parametrizes $U_{0}$, and $\alpha$ is a smooth  function such that $(x_n,\alpha(x_n))$ parametrizes the planar $x_n$-curves.
\end{lem}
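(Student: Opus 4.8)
The plan is to realise a neighbourhood of $U_0$ in $\Sigma^n$ as the total space of the fibration of Lemma \ref{para3}: I take $U_0$ as a ``zero section'' parametrized by $Y(x_1,\dots,x_{n-1})$ and sweep it out along the fibers $\gamma_p$, the integral curves of $\nabla h$. For this I need three facts: that each fiber lies in a fixed $2$-plane attached to its foot point, that this plane is exactly the normal plane $N_pU_0$, and that all the fibers are mutually congruent, so that a single profile function $\alpha$ describes them all. The first two are essentially Lemma \ref{para3}; the congruence is the real content.

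First I would fix the adapted frame. At $p=Y(x_1,\dots,x_{n-1})\in U_0$ the vectors $e_n=\nabla h/|\nabla h|$ and $N_0$ are orthonormal, the former tangent to $\Sigma^n$ but normal to $U_0$, the latter normal to $\Sigma^n$; since $U_0$ is a codimension-$2$ submanifold of $\mathbb{E}^{n+1}$ (Lemma \ref{para1}) they span $N_pU_0$. By \eqref{planConstant} the plane $T\wedge N$ is parallel along $\gamma_p$, so $\gamma_p$ stays in the affine plane $p+N_pU_0$. Moreover $\gamma_p$ is a geodesic of $\Sigma^n$ (Lemma \ref{para3}) whose velocity is the principal direction $e_n$ of $\lambda_n$, so its ambient acceleration is $D_sT=\mathrm{II}(T,T)\,N=\lambda_n N$; thus $\gamma_p$ is a plane curve whose signed curvature, in the frame $(e_n,N_0)$, equals $\lambda_n=-\tfrac{nh}{2}$.

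The main obstacle is to prove that the fibers are congruent, which I would settle with the fundamental theorem of plane curves. It suffices to show that the signed curvature $\lambda_n$, expressed as a function of arclength, is independent of $p$. In the holonomic coordinates of \eqref{metric} the leaves are the hypersurfaces $\{x_n=\mathrm{const}\}$, so $h=h(x_n)$ depends on $x_n$ alone; and by Lemma \ref{para3} the metric coefficient satisfies $v_{n,i}=0$, i.e. $v_n=v_n(x_n)$. Hence the arclength $s(x_n)=\int_0^{x_n}v_n\,d\xi$ along a fiber is one and the same function for every fiber, and $\lambda_n=-\tfrac{n}{2}h(x_n)$ is a fixed function of $s$ that does not depend on $p$. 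Matching the initial frames $(e_n(p),N_0(p))$, the fundamental theorem of plane curves then yields that all the $\gamma_p$ are congruent.

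Finally I would read off the parametrization. Since $\gamma_p'(0)=e_n(p)$, the orthogonal projection onto the $e_n(p)$-axis is a local coordinate near $p$; calling it $x_n$ and writing the common profile as a graph over this axis produces a single smooth function $\alpha$, with $\alpha(0)=\alpha'(0)=0$, such that $\gamma_p(x_n)=p+x_n\,e_n(p)+\alpha(x_n)\,N_0(p)$ for every $p$. Assembling over $U_0$ gives $X(x_1,\dots,x_n)=Y+x_n\,e_n+\alpha(x_n)\,N_0$; since $\partial_{x_n}X|_{x_n=0}=e_n$ is transverse to $TU_0$, this map is an immersion parametrizing a neighbourhood of $U_0$ in $\Sigma^n$, which is the asserted form.
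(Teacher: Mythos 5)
Your proposal is correct, but it proves the key point --- that a single profile function $\alpha(x_n)$ works for every fiber --- by a genuinely different mechanism than the paper. The paper stays entirely inside the holonomic parametrization: it writes $X=Y+\alpha\,N_0+\beta(x_n)\,e_n$ with $\alpha$ a priori a function of \emph{all} the coordinates, normalizes $\beta(x_n)=x_n$, and then combines the orthogonality \eqref{product} of curvature line coordinates with the Weingarten relations to get the pointwise alternative $\alpha_{,n}\,\alpha_{,i}=0$; properness excludes $\alpha_{,n}\equiv 0$ (which would force $\lambda_n=-\tfrac{nh}{2}=0$, i.e. minimality), whence $\alpha_{,i}=0$ and $\alpha=\alpha(x_n)$. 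You instead prove that all fibers are congruent plane curves: the arclength function $s(x_n)=\int_0^{x_n}v_n\,d\xi$ is the same for all fibers because $v_{n,i}=0$ (note this is established inside the \emph{proof} of Lemma \ref{para3} --- it is the geodesic condition --- so cite it as such), the signed curvature $\lambda_n=-\tfrac{n}{2}h$ is a common function of $s$ because $h=h(x_n)$, and the fundamental theorem of plane curves does the rest. Your route buys two things the paper's does not give at this stage: the congruence of the integral curves of $\nabla h$, which the paper only obtains later (Proposition \ref{para5}), and a clean justification that the graph parameter over the $e_n(p)$-axis can be chosen uniformly in the foot point $p$ --- i.e. why an ansatz with $\beta=\beta(x_n)$ rather than $\beta(x_1,\ldots,x_n)$ is legitimate, a point the paper passes over quickly. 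Conversely, the paper's computation is shorter once its ansatz is accepted. One minor item to add to yours: the lemma asserts that $(x_1,\ldots,x_n)$ are curvature line coordinates, so take $Y$ to be the restriction of the holonomic parametrization to $U_0$ and observe that your coordinate change is diagonal --- $x_n$ is a function of the old $n$-th coordinate alone, precisely because of the congruence --- hence the curvature-line property is preserved; equivalently, one can verify $N_{,i}\parallel X_{,i}$ directly, as the paper does in Lemma \ref{paraODE}.
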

\begin{proof}
By Lemma \ref{para3}, there is a local parametrization of $\Sigma^n$  by curvature line coordinates 
\begin{equation}
\begin{array}{cc}
 X:&\Omega \subset \mathbb{R}^n\longrightarrow \Sigma^n \subset \mathbb{R}^{n+1} \\
&(x_1,\ldots,x_n) \mapsto X(x_1,\ldots,x_n)
\end{array}
.
\end{equation}
Since the principal curvature coordinates form an orthogonal net in Subsection \ref{holonomic}  and  from the orthogonal decomposition 
of its metric  \eqref{metric},  $U_{0}$ is parametrized by 
$$
Y(x_1,\ldots, x_{n-1}) := X(x_1,\ldots, x_{n-1},0).
$$
Again, using lemma \ref{para3}, 
for each point $p = Y(x_1,\ldots, x_{n-1})$, the integral curve of $\nabla h$ passing through $p$ which is a  $x_n$-curvature line,
is a planar curve  in the affine plane $P_p$ passing through $p$ and generated by the normal at $p$ to $\Sigma^n$, $N_0$  and 
$e_n =\frac{\nabla h }{|\nabla h|}$.
%Denote the normal at $p$ to $\Sigma^n$ by 
%$N(p):=N_0$  and $e_n$ which is a tangent vector field at $p$ to the curve.
Locally near $U_{0}$ this curvature line is a graph in $P_p$ over the $\mathbb{R}_{e_n}$-axis. Thus,  there exist two functions $\alpha(x_1,\ldots, x_n) $ and  $\beta(x_n)$ such that :
\begin{equation}
X(x_1,\ldots,x_n) = Y(x_1,\ldots, x_{n-1}) + \alpha(x_1,x_2,\ldots, x_n)N_0(x_1,\ldots,x_{n-1}) +\beta( x_n ) e_{n}(x_1,\ldots, x_{n-1}).
\end{equation}
Note that, if  $(x_1,\ldots, x_n)$ is locally a curvature line coordinate system (or orthogonal coordinate system), so is
 the new coordinate system $\left(x_1,\ldots,x_{n-1}, \beta_n( x_n)\right)$ 
for a  smooth local one-to-one function $\beta_n$. So by abuse of notation, we  write the following 
\begin{equation}\label{surRev2}
X(x_1,\ldots,x_n) = Y(x_1,\ldots, x_{n-1}) + \alpha(x_1,x_2,\ldots,x_n)N_0(x_1,\ldots,x_{n-1}) +x_n  e_{n}(x_1,\ldots, x_{n-1}).
\end{equation}

Now, by orthogonality of the curvature coordinates : \begin{equation}   \label{product}
\langle X_{,i}, X_{,n}\rangle =0
\end{equation} 
for all $i \in \{1,\ldots, n-1\}$. Let $\lambda_{0i}$, $i \in \{1,\ldots, n-1\}$,  be the principal curvatures $\lambda_i$ on $U_0$, 
then differentiating  $X$ with respect to $x_i$ we obtain
\begin{equation}\label{X}
X_{,i}  =( 1 -  \alpha\lambda_{0i} )  Y_{,i} 
 +x_n e_{n,i}+ \alpha_{,i}N_0 =( 1 -  \alpha\lambda_{0i}-x_n\mu_{0i} )  Y_{,i} 
 + \alpha_{,i}N_0 , \quad i\in \{1,\ldots, n-1\}, 
\end{equation}

 and
 \begin{equation}\label{ALPHA}
  X_{,n}= \alpha_{,n} N_0 +  e_{n}.
\end{equation}
From equations \eqref{product}, \eqref{X} 
 and \eqref{ALPHA} one obtains $\alpha_{,n}\cdot \alpha_{,i} =0$. If $\alpha_{,n}$ is null, then the $x_n$-curves are straight, i.e. $\lambda_n =0$. 
This implies $\lambda_n=\frac{-nh}{2}=0$, i.e. $\Sigma^n$ is minimal,  which contradicts the properness of $\Sigma^n$.
Consequently $\alpha_{,i} = 0$.
Hence, $\alpha$ is a function of $x_n$ only.
\end{proof}

\subsection{ Second order ODE governing  the integral curves of $\nabla h$}\label{ODE0}
We saw in the previous  lemma that the function $\alpha$ depends only on the parameter $x_n$. 
We now show that $\alpha$ is the unique solution of an ODE with the  initial conditions $\alpha(0)=\alpha'(0)=0$,
 which  implies  that the level sets of $h$ are isoparametric (Lemma \ref{para5}).\\
Denote by $\alpha'$ the derivative of $\alpha(x_n)$ with respect to $x_n$. 

\begin{lem}\label{paraODE}
Using the notations of Lemma \ref{para4} the function $\alpha$ locally satisfies  a second order 
 ODE  of the form 
$ \alpha'' =  R(x_n,\alpha,\alpha')$ where $R(x,y,z)$ is a  rational function  
%$$R(x,y,z) = -\frac{(1+z^2)}{3}\sum_{i=1}^{n-1}\frac{1+z \kappa_{0i}}{\rho_i-y-x\kappa_{0i}}$$
 which is smooth in a neighborhood of $0$. The solution  is  locally unique  with the 
initial condition $\alpha(0)=\alpha'(0)=0$.
\end{lem}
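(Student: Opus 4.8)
The plan is to convert the geometric picture established in Lemma \ref{para4} into an explicit differential relation by computing every principal curvature of $\Sigma^n$ along a single fibre and then imposing the biconservative condition of Proposition \ref{biTan}. First I would fix a base point $p\in U_0$ and work along the integral curve of $\nabla h$ through $p$. Since $\alpha$ depends on $x_n$ alone (Lemma \ref{para4}), the quantities $\lambda_{0i}=\lambda_{0i}(p)$ and $\mu_{0i}=\mu_{0i}(p)$ occurring in \eqref{X} and \eqref{mu} are constants along this fibre, and they will become the fixed coefficients of the ODE.

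Next I would determine the unit normal of $\Sigma^n$ in the parametrization \eqref{surRev2}. Because $N$ is orthogonal to each $X_{,i}$, and the vectors $X_{,i}$ for $i\le n-1$ are proportional to $Y_{,i}$ while $X_{,n}=\alpha'N_0+e_{n}$ (see \eqref{X}, \eqref{ALPHA}), the normal must lie in $\mathrm{span}(N_0,e_{n})$ and is forced to be $N=(N_0-\alpha'e_{n})/\sqrt{1+(\alpha')^2}$. Writing $W=\sqrt{1+(\alpha')^2}$, a Weingarten computation in the direction $e_i$ ($i\le n-1$), using $N_{0,i}=-\lambda_{0i}Y_{,i}$ and $e_{n,i}=-\mu_{0i}Y_{,i}$, yields
\begin{equation*}
\lambda_i=\frac{\lambda_{0i}-\alpha'\mu_{0i}}{W\,(1-\alpha\lambda_{0i}-x_n\mu_{0i})},\qquad i=1,\dots,n-1,
\end{equation*}
while differentiating $N$ along the fibre identifies $\lambda_n$ with the curvature of the planar graph $x_n\mapsto(x_n,\alpha(x_n))$, that is $\lambda_n=\alpha''/W^{3}$.

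I would then impose biconservativity. By Lemma \ref{para1} the fibre direction is the eigendirection of $\lambda_n=-\tfrac{nh}{2}$, and since $nh=\mathrm{Tr}\,A=\sum_{i=1}^{n}\lambda_i$ this is equivalent to $\lambda_n=-\tfrac13\sum_{i=1}^{n-1}\lambda_i$. Substituting the two formulas above gives
\begin{equation*}
\alpha''=-\frac{1+(\alpha')^2}{3}\sum_{i=1}^{n-1}\frac{\lambda_{0i}-\alpha'\mu_{0i}}{1-\alpha\lambda_{0i}-x_n\mu_{0i}},
\end{equation*}
which is precisely of the claimed form $\alpha''=R(x_n,\alpha,\alpha')$ with $R(x,y,z)$ rational, its denominator being the product of the linear factors $1-y\lambda_{0i}-x\mu_{0i}$. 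Each factor equals $1$ at $(x,y)=(0,0)$, so $R$ is smooth in a neighbourhood of the origin. The initial data are read off the parametrization: $X(\cdot,0)=Y$ forces $\alpha(0)=0$, and the fact that the fibre starts tangent to $e_{n}$ (Lemma \ref{para3}), so that $X_{,n}|_{0}=\alpha'(0)N_0+e_{n}$ is aligned with $e_{n}$, forces $\alpha'(0)=0$. Local existence and uniqueness then follow from the Cauchy--Lipschitz theorem applied to the first order system $(\alpha,\alpha')'=(\alpha',R)$ with smooth right-hand side.

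The main obstacle is the curvature computation of the second paragraph: correctly pinning down $N$, checking that the coordinate frame $\{X_{,i}\}$ stays orthogonal and non-degenerate near $x_n=0$ — which is exactly what keeps the factors $1-\alpha\lambda_{0i}-x_n\mu_{0i}$ bounded away from zero, hence $R$ smooth — and tracking signs so that $\lambda_n$ genuinely equals the planar curvature $\alpha''/W^{3}$. One conceptual point worth flagging is that $\lambda_{0i}$ and $\mu_{0i}$ are treated as constants only because the fibre through $p$ has been fixed; the stronger compatibility that forces them to be independent of $p$, i.e. that $U_0$ is isoparametric, is a separate issue addressed in Lemma \ref{para5}.
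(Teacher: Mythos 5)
Your proposal is correct and takes essentially the same route as the paper's proof: differentiate the parametrization of Lemma \ref{para4} along a fixed fibre, identify the unit normal $N=(N_0-\alpha'e_n)/\sqrt{1+\alpha'^2}$, compute the principal curvatures $\lambda_i$ ($i\le n-1$) by Weingarten and $\lambda_n=\alpha''/(1+\alpha'^2)^{3/2}$ as the planar curvature, then impose $nh+2\lambda_n=0$ to obtain the rational ODE, whose coefficients are smooth near the origin because each denominator equals $1$ at $(x_n,\alpha)=(0,0)$, and conclude by Cauchy--Lipschitz with $\alpha(0)=\alpha'(0)=0$. The only discrepancy is the sign of the $\alpha'\mu_{0i}$ term in the numerators (you obtain $\lambda_{0i}-\alpha'\mu_{0i}$ where the paper writes $\lambda_{0i}+\alpha'\mu_{0i}$); given the paper's own normalization $e_{n,i}=-\mu_{0i}Y_{,i}$, your sign is in fact the internally consistent one, and the difference is a pure convention matter that does not affect the statement of the lemma.
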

\begin{proof}
From Lemma  \ref{para4} the BCH $\Sigma^n$,  locally,  admits   a parametrization of the form :
\begin{equation}\label{lEquation}
X(x_1,\ldots,x_n) = Y(x_1,\ldots, x_{n-1}) + \alpha(x_n)N_0(x_1,\ldots,x_{n-1}) + x_n  e_{n}(x_1,\ldots, x_{n-1}).
\end{equation}
Differentiating  $X$, we obtain
$$X_{,i}  =( 1 -  \alpha\lambda_{0i} )  Y_{,i} 
 +x_n e_{n,i}, \quad i\in \{1,\ldots, n-1\},
 $$
 and
 $$
 X_{,n}= \alpha' N_0 + e_{n}. $$
 %Since $NU_{0}$ is flat  from Lemma \ref{para1}  equations \eqref{mu}
 From the Codazzi  equation for $\Sigma^n$ \eqref{total}
there exists a  principal curvature  $\mu_{0i }$ on $U_0$ such that $$ e_{n,i}= -\mu_{0i} Y_{,i},$$
where $Y_{,i}\in TU_0$, for all $ i\in \{1,\ldots, n-1\}$.\\
  Equivalently, from  the Codazzi  equation for $\Sigma^n$ \eqref{coda},
 and   by holonomicity 
 $$[e_i,e_n] = \nabla_{e_i}e_n -  \nabla_{e_n}e_i = 0,$$
 then
 $$ \nabla_{e_i}(A)(e_n)- \nabla_{e_n}(A)(e_i) = A([e_i,e_n]) = 0.$$
 As $ \nabla_{e_i}(A)(e_n) = \nabla_{e_i}(Ae_n)- A( \nabla_{e_i} e_n)$ and
 $ \nabla_{e_n}(A)(e_i) = \nabla_{e_n}(Ae_i)- A( \nabla_{e_n} e_i)$,  we get
 $$\nabla_{e_i}(\lambda_n e_n) - \nabla_{e_n}(\lambda_i e_i)=0.$$
 And since $\nabla_{e_i}(\lambda_n)=0$, we obtain 
 $$\nabla_{e_i} e_n =- \frac{\nabla_{e_n} \lambda_i}{\lambda_i -\lambda_n}e_i$$
 which is equation \eqref{total}.

  Thus, we  deduce  that 
  \begin{equation}\label{metriqueEvolution}
  \begin{array}{ll}
        X_{,i}&   =( 1 -  \alpha\lambda_{0i} -x_n\mu_{0i})  Y_{,i}  := \beta_i Y_{,i} 
    \quad i\in \{1,\ldots, n-1\}, \\
      X_{,n}&= \alpha' N_0 + e_{n}.  
  \end{array}
  \end{equation}

The coefficients of the metric \eqref{metric} of $\Sigma^n$ are thus equal to 
   $$v_i ^2=\langle X_{,i},X_{,i}\rangle= v^2_{0i}( 1 -  \alpha(x_n)\lambda_{0i} -x_n\mu_{0i})^2\quad {\rm if}\  i\neq n$$
 and 
 $$v^2_n = 1+\alpha'^2$$
  which will be also denoted by  
 $g_{ii} = \beta_i^2 v^2_{oi}, \  i\in \{1,\ldots, n-1\}, \  g_{nn} = 1+\alpha'^2=\beta_n^2$.\\
 Let us investigate  the evolution of the principal curvature along an integral curve of $\nabla h$ defined by $\alpha$. 
  Since, from equation \eqref{planConstant}, the normal planes to $U_0$ are constant along integral curves, it is straightforward that 
 \begin{equation}\label{unitNormal}
N=(N_0 -\alpha'  e_{n})\frac{1}{\sqrt{1+\alpha'^2}}.
\end{equation}
%$$X_{,ij} =Y_{,ij} +\alpha(x_n)N_{0,ij}, \quad i\in \{1,\cdots, n-1\},\quad X_{,in}= \alpha' N_{0,i}  
%\quad X_{,nn}= \alpha'' N_0  $$
%$$\langle Y_{,ij},N_0\rangle = \lambda_{0i} E_{0i}\delta_{ij},  \langle Y_{,i},Y_{j}\rangle =  E_{0i}\delta_{ij} $$
 Along curvature lines of $U_0$  and by definition of the principal curvatures $\lambda_{0i}$ of $U_0$:
$$N_{0,i} =-\lambda_{0i}Y_{,i} \ \  i\in\{1,\ldots, n-1\}.$$
 Plug  these identities   into the derivatives of \eqref{unitNormal} 
 along curvature lines :
$$N_{,i} =N_{0,i} \frac{1}{\sqrt{1+\alpha'^2}} -\alpha'\mu_{0i} \frac{1}{\sqrt{1+\alpha'^2}}Y_{,i}=
-\left(  \lambda_{0i}  +   \alpha'\mu_{0i}    \right) Y_{,i}     \frac{1}{\sqrt{1+\alpha'^2}}.$$
Since the  variation of the normal of $\Sigma^n$ along  the $x_i$- curvature lines for $i\in\{1,\ldots, n-1\}$ is given by the identities 
$N_{,i} = -\lambda_i X_{,i} =-\lambda_i Y_{,i}\beta_i$, (see equations \eqref{metriqueEvolution})
we deduce that  the variation of the principal curvature along the $x_n$-curvature lines are :
\begin{equation}
\lambda_i =(\lambda_{0i}+\alpha'\mu_{0i})\frac{\gamma}{\beta_i} ,\quad 
 i\in \{1,\ldots , n-1\},{\rm where} \  \gamma = \frac{1}{\sqrt{1+\alpha'^2}}.
\end{equation}
First,  we obtain  
%\begin{equation}\label{lambda1}
%\sum_{i=1}^{n-1} \lambda_i  =  \frac{1}{\sqrt{1+\alpha'^2}}  \sum _{i=1}^{n-1} 
% \left( \frac{1+\alpha'\kappa_{0i}}{\rho_i -\alpha-x_n\kappa_{0i} }          \right) 
% \sum_{i=1}^{n-1} \lambda_i^2  = \frac{1}{1+\alpha'^2} \sum _{i=1}^{n-1} 
% \left( \frac{1+\alpha'\kappa_{0i}}{\rho_i -\alpha-x_n\kappa_{0i} }          \right)^2 
%\end{equation}
%where $\rho_i= \frac{1}{\lambda_i}$  are the radii of curvature of $\lambda_i$  and $\kappa_i :=\frac{\mu_i}{\lambda_i }$,  $i\in \{1,\ldots, n-1\}$.
\begin{equation}\label{lambda1}
\sum_{i=1}^{n-1} \lambda_i  =  \frac{1}{\sqrt{1+\alpha'^2}}  \sum _{i=1}^{n-1} 
  \frac{\lambda_{0i}+\alpha'\mu_{0i}}{1 -\alpha\lambda_{0i}-x_n\mu_{0i} } . 
% \sum_{i=1}^{n-1} \lambda_i^2  = \frac{1}{1+\alpha'^2} \sum _{i=1}^{n-1} 
% \left( \frac{1+\alpha'\kappa_{0i}}{\rho_i -\alpha-x_n\kappa_{0i} }          \right)^2 
\end{equation}
Also differentiationg  along the  $x_n$-curvature line
$$N_{,n} =  (\frac{1}{\sqrt{1+\alpha'^2}})' N_0 -  e_{n}(\frac{\alpha'}{\sqrt{1+\alpha'^2}})' = -\lambda_n X_{,n} =-\lambda_n ( \alpha' N_0 + e_n)$$
 we simply obtain the standard equation:
\begin{equation}\label{lambda2}
 \lambda_n =  \frac{\alpha''}{(1+\alpha'^2)^{3/2}}.
\end{equation}
Since $nh+2\lambda_n=0,$ i.e.  
$\sum_{i\neq n} \lambda_i +3\lambda_n =0,$
and from equations \eqref{lambda1} and \eqref{lambda2}
$\alpha$ satisfies the following ODE of order 2
\begin{equation}\label{courbeIntegrale}
\frac{\alpha''}{1+\alpha'^2} =-\frac{1}{3}\sum_{i=1}^{n-1} \lambda_i  =  \sum _{i=1}^{n-1} 
  \frac{\lambda_{0i}+\alpha'\mu_{0i}}{1 -\alpha\lambda_{0i}-x_n\mu_{0i} } .
\end{equation}
The  2nd order ODE  is of the form 
$ \alpha'' =  R(x_n,\alpha,\alpha')$, where the rational function  $R$  equals 
%$$R(x,y,z) = -\frac{(1+z^2)}{3}\sum_{i=1}^{n-1}\frac{1+z \kappa_{0i}}{\rho_i-y-x\kappa_{0i}}.$$
$$R(x,y,z) = -\frac{(1+z^2)}{3}\sum_{i=1}^{n-1}\frac{\lambda_{0i}+z \mu_{0i}}{1-\lambda_{0i}y-x\mu_{0i}}.$$

$R$ is a smooth  function when  $(x,y) $ lies in a neighborhood of $(0,0)$. Hence, $\alpha$ is  locally the unique   solution with 
the initial condition $\alpha(0)=0$ by construction, and $\alpha'(0)=0$ since the $\alpha$-curve is tangent to $e_n$ at $0$. From equation \eqref{courbeIntegrale}  $\alpha$ is analytic.\\
\end{proof}

\subsection{ Level  sets of  $h$ }
\begin{prop}\label{para5}  The level sets of $h$ are isoparametric and the  integral curves of $\nabla h$ are congruent planar curves.
\end{prop}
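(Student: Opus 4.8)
The plan is to prove the two assertions separately, deriving both from the explicit normal-evolution parametrization of Lemma~\ref{para4} and the second order ODE of Lemma~\ref{paraODE}.

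\emph{Congruence of the integral curves.} This part is essentially immediate. By Lemma~\ref{para3} each integral curve of $\nabla h$ through a point $p=Y(x_1,\dots,x_{n-1})\in U_0$ lies in the fixed affine $2$-plane $P_p$ spanned by $e_n$ and $N_0$ at $p$, and by Lemma~\ref{para4} it is the graph $s\mapsto p+\alpha(s)\,N_0(p)+s\,e_n(p)$ of one and the same function $\alpha$, independent of $p$. Given two points $p,q\in U_0$, the rigid motion of $\mathbb{E}^{n+1}$ sending $p$ to $q$ and the orthonormal pair $\bigl(e_n(p),N_0(p)\bigr)$ to $\bigl(e_n(q),N_0(q)\bigr)$ carries one curve onto the other; hence all integral curves of $\nabla h$ are congruent planar curves, as claimed.

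\emph{Reduction of isoparametricity.} For a leaf $U_t\subset\mathbb{E}^{n+1}$ the $2$-dimensional normal space at a point is spanned by $N$ and $e_n$, and the computation of Lemma~\ref{para1} ($\langle D_{e_i}N,e_n\rangle=\langle D_{e_i}e_n,N\rangle=0$ for $i<n$) shows that $NU_t$ is flat and that both $N$ and $e_n$ are $\nabla^{\perp}$-parallel along $U_t$. Consequently every parallel normal field is $\xi=aN+b\,e_n$ with $a,b$ constant, so $A^{U_t}_{\xi}=a\,A_N|_{TU_t}+b\,A_{e_n}|_{TU_t}$ has eigenvalues $a\lambda_i+b\mu_i$, where the $\lambda_i$ are the principal curvatures of $\Sigma^n$ and the $\mu_i$ are the curvatures of \eqref{mu}. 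By the definition of isoparametric recalled in the introduction it therefore suffices to prove that each $\lambda_i$ and each $\mu_i$ is constant on every leaf $U_t$.

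\emph{Propagation to every leaf.} The evolution formula established in Lemma~\ref{paraODE}, namely $\lambda_i=(\lambda_{0i}+\alpha'\mu_{0i})\,\gamma/\beta_i$ with $\beta_i=1-\alpha\lambda_{0i}-x_n\mu_{0i}$ and $\gamma=(1+\alpha'^2)^{-1/2}$, together with the Codazzi relation $\mu_i=e_n(\lambda_i)/(\lambda_i-\lambda_n)$ obtained there, expresses the curvatures of an arbitrary leaf purely in terms of the initial data $(\lambda_{0i},\mu_{0i})$ on $U_0$ and of the $p$-independent functions $\alpha,\alpha',\gamma$. Hence it is enough to show that $\lambda_{0i}$ and $\mu_{0i}$ are constant on $U_0$; constancy then propagates along $x_n$ to every $U_t$.

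\emph{Main obstacle.} The difficulty is to establish this last constancy. The uniqueness part of Lemma~\ref{paraODE} gives that the same $\alpha$ solves $\alpha''=R_p(x_n,\alpha,\alpha')$ with identical initial conditions at every $p\in U_0$, so $R_p\bigl(x_n,\alpha(x_n),\alpha'(x_n)\bigr)$ is independent of $p$; via \eqref{courbeIntegrale} this is exactly the statement that $\sum_{i<n}\lambda_i=-3\lambda_n$ is leaf-wise constant (which also follows at once from $nh+2\lambda_n=0$ and the fact that $h=h(x_n)$). The obstacle is that this single scalar identity does not by itself separate the summands. I would resolve it by a focal/singularity analysis of the analytic identity in $x_n$: since the principal curvatures are distinct, the denominators $\beta_i$ vanish at distinct focal parameters, whose locations would move with $p$ if the data $(\lambda_{0i},\mu_{0i})$ did; as the total sum is $p$-independent and real analytic in $x_n$, each focal parameter, and hence each pair $(\lambda_{0i},\mu_{0i})$, must be locally constant. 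The holonomic Codazzi equations of Subsection~\ref{holonomic} (constancy of $\lambda_{0i}$ along its own spherical leaf, together with the relation \eqref{mu}) then cover the remaining tangential directions, which completes the proof that the level sets of $h$ are isoparametric.
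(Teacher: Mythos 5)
Your first assertion (congruence of the integral curves) is correct and matches the paper: since Lemma \ref{para4} makes $\alpha$ a function of $x_n$ alone, every $\nabla h$-curve is the graph of the same planar profile over the line $\mathbb{R}e_n(p)$ in the plane spanned by $e_n(p),N_0(p)$, and a rigid motion carries one onto another. Your reduction of isoparametricity to leafwise constancy of the pairs $(\lambda_{0i},\mu_{0i})$, and the propagation from $U_0$ to nearby leaves, are also consistent with the paper. The genuine gap is exactly at the step you flag as the ``main obstacle''. Your focal-parameter argument presumes that the identity $\tilde R(s,t)=\tilde R(s',t)$, which holds only for $t$ in a small neighbourhood of $0$ (where every denominator $\beta_i=1-\lambda_{0i}\alpha(t)-t\mu_{0i}$ is close to $1$), can be analytically continued until those denominators actually vanish. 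But $\alpha$ is only a local solution of the ODE of Lemma \ref{paraODE}; whether its continuation ever reaches a focal parameter is precisely the delicate point, and it cannot be assumed. The paper resolves this by a dichotomy: if $t,\alpha,\alpha'$ satisfy no polynomial relation, no continuation is attempted at all; instead one writes $\alpha=ht^2$, $\alpha'=kt$ in $\mathbb{C}[[t]]$ and invokes uniqueness of the partial-fraction decomposition over the field $K=\mathbb{C}(h,k)$, after checking that denominators of the form $ht^2+bt-c$ are pairwise equal or coprime and disposing of the reducible case. If, on the contrary, there is a polynomial relation $P(t,\alpha,\alpha')=0$, the paper first proves $\alpha$ is an algebraic function, hence extends to a branched cover of $\mathbb{CP}^1$ attaining every value; only then is a point $t_0$ with $\alpha(t_0)=l_i(t_0)$ guaranteed to exist, so that the pole-matching argument can run.

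Even granting continuation up to the poles, your final inference ``each focal parameter, and hence each pair $(\lambda_{0i},\mu_{0i})$, must be locally constant'' is too quick. The focal parameter $t_i$ is the point where the graph of $\alpha$ meets the line $l_i(t)=c_i-b_it$; constancy of this crossing point only constrains $(b_i,c_i)$ to the one-parameter family of lines through $(t_i,\alpha(t_i))$, so the pole location alone does not determine the pair. The paper extracts the extra information from a local Laurent expansion at $t_0$: equality of pole orders and of leading coefficients forces $b_i(s_1)=b_i(s_2)$ first, and only then does the pole location (equation \eqref{egalite10}) give $c_i(s_1)=c_i(s_2)$. One must also rule out cancellations --- the numerator $\lambda_{0i}+\alpha'\mu_{0i}$ can vanish at the zero of its own denominator (the paper's reducible case $h=b^2k(1+kc)$), and two distinct terms could a priori share a pole --- which is what the paper's coprimality analysis is for. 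So your outline identifies the right objects (singularities of $\tilde R$ in $t$), but the two mechanisms that make the paper's proof work --- the algebraic/transcendental dichotomy and the pole-order/residue matching --- are absent, and without them the argument does not close.
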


\begin{proof} We are 
going to show that the principal curvatures of the $h$-level set $U_0$  are constant. Indeed 
From Lemma  \ref{paraODE}  the $\alpha$-curve is a function of  only  the $x_n$-coordinate. Hence all the $\alpha$-curves are congruent.  In particular,
from equation \eqref{courbeIntegrale}, from Lemma \ref{para1}, and as $h$ is constant on $U_0$,  for any two points $s,s'\in U_{0}$  and $x_n=t$ in a small neighborhood of $0$, we have
\begin{equation} \label{paraODEbis}
 \tilde R(s,t):= \sum_{i=1}^{n-1}\frac{\lambda_{0i}(s)+\alpha' (t)\mu_{0i}(s)}{1-\lambda_{0i}(s)\alpha(t)-t\mu_{0i}(s)}=
 \tilde R(s',t):=
  \sum_{i=1}^{n-1}\frac{\lambda_{0i}(s')+\alpha' (t)\mu_{0i}(s')}{1-\lambda_{0i}(s')\alpha(t)-t\mu_{0i}(s')}.
\end{equation}
Let $c_i(s):=\frac{1}{\lambda_{0i}}(s)$  and  $b_i(s) =\frac{\mu_{0i}}{\lambda_{0i}}(s)$. 
First, we show the uniqueness of the coefficients $(b_i,c_i)$
in the  case  where $\alpha,\alpha'$ and $t$ are algebraically independent.

Let us see 
$\alpha $ as a (convergent) power series of $\mathbb{C}[[t]]$. From its properties, 
$\alpha = h.t^2$ and  $\alpha' = k.t$, where $h$ and $k$ are invertible elements of  $\mathbb{C}[[t]]$ since their zero order coefficient 
is non-zero (cf. equation \eqref{lambda2}).  Then equation  \eqref{paraODEbis} becomes an equality in the
  \begin{equation} \label{ODE5}
\sum_{i=1}^{n-1}\frac{tkb_i(s)+1}{t^2h +tb_i(s) -c_i(s)} =
\sum_{i=1}^{n-1}\frac{tkb_i(s')+1}{t^2h +tb_i(s') -c_i(s')} = \frac{f}{g}.
 \end{equation}
$\frac{f}{g}$ is a rational function of $t$ in the extension field $ K=\mathbb{C}(h,k)$
where $f$ and $g$ are polynomials in $t$ 
 with coefficients in the field  $K$ and $deg(f)<deg(g)$. Equation \eqref{ODE5} then provides two decompositions of the same $\frac{f}{g}$.
\begin{lem} If there are  two  points $s_1,s_2\in U_0$ such that 
$\tilde R(s_1,t) = \tilde R(s_2,t)$ and there is no non-zero polynomial $P$ such that $P(t,k,h)=0$, then 
 $\left(b_i\left(s_1\right)),c_i\left(s_1\right)\right) = (b_{\sigma(i)}(s_2),c_{\sigma(i)}(s_2))$ for $i= 1,\ldots,n-1$ and some  permutation $\sigma \in \mathfrak{S}_{n-1}$.
\end{lem}
\begin{proof}

 Let us consider the first decomposition, for example, the LHS.
Consider any two polynomials of the form $ht^2+bt-c$ and $ht^2+b't-c'$ that are different, 
  i.e. such that   $(b,c) \neq (b',c')$: 
\begin{enumerate}
    \item 
    If $b=b'$, then 
  $c\neq c'$ and the polynomials are coprime. 
  \item If $ b\neq b'$, then the Euclidean algorithm  yields at the second step for remainder 
 $$r= -c'+\frac{c-c'}{(b-b')}\left(b' + \frac{(c-c')h}{(b-b')}  \right).$$
 \begin{enumerate}
 \item If $c=c'$,  then $r=c'$ which is non-zero.
 \item  If $c\neq c'$, then the $h$ coefficient is non-zero; hence $r\neq 0$. 

 \end{enumerate}
\end{enumerate}  
In all these cases, the remainder is a non-zero element of $K$; hence the denominators are coprime.
 In conclusion,   any  two polynomials of the form $ht^2+bt-c$ are  either equal or coprime.\\
 Second,  we consider a term of the LHS:
  \begin{equation} 
 \frac{tkb+1}{t^2h +tb -c}, 
 \end{equation}
 then,  either the fraction is irreducible, or
 $h= b^2k(1+kc)$ and the fraction reduces to the irreducible fraction
  \begin{equation} \label{ODE6}
 \frac{kb}{ht-kbc}. 
 \end{equation} 
Third, any considered  denominator is  either irreducible in $K$  or a product of 2 linear forms with 
non equal roots because  the discriminant $b^2+4ch\neq 0$ since $c\neq 0$.  The corresponding fraction can  then be explicitly and uniquely  split  as a sum of two 
fractions of the form:
 \begin{equation} \label{ODE7}
 \frac{tkb+1}{t^2h +tb -c}  = \frac{\delta}{t-\rho_1} +  \frac{\epsilon}{t-\rho_2}
\qquad  \delta,\epsilon,\rho_1,\rho_2 \in K. \end{equation}
 Grouping the fractions  having 
 identical denominators,  we obtain a partial fraction decomposition of $\frac{f}{g}$, i.e. a sum  of irreducible fractions
 with coprime denominators. By the uniqueness of the partial fraction
 decomposition ( cf. for example \cite{VDW}) the partial fraction
 decomposition of $\frac{f}{g}$ thus obtained  by the LHS is
  identical to  the partial fraction decomposition  given by the RHS.
\end{proof}
In general, suppose the principal curvatures of $U_0$ are 
non-constant.
Beforehand, one represents
the $\tilde R(s,t)$ as follows:
$$\tilde R(s,t) = 
\left(\sum_{i=1}^{n-1} \frac{1}{\alpha(t)-l_{i}(s)(t)}\right)+ 
\left(\sum_{i=1}^{n-1} \frac{b_i(s)}{\alpha(t)-l_i(s)(t)}\right)\alpha' = A_s(t,\alpha) + B_s(t,\alpha)\alpha'$$
where the linear forms $l_i(s)(t) =c_i(s)-b_i(s)t$, $i=1,\ldots,n-1$. Without reference to the parameter $s$, we define
$$A(t,\alpha)=\sum_{i=1}^{n-1} \frac{1}{\alpha(t)-l_i(t)}$$
and $$B(t,\alpha)=\sum_{i=1}^{n-1} \frac{b_i}{\alpha(t)-l_i(t)}.$$
Then, equations  \eqref{paraODEbis} or \eqref{ODE5} are true 
for a continuum of   $s$ (parametrizing for example a curve on 
$U_0$), with equal  $\tilde R(s,t)$, but
different $\{(b_i(s),c_i(s)) \}_{i=1,\ldots, n-1}$. Hence 
$$\alpha' = Q_s(t,\alpha)
$$ for distinct rational functions $Q_s(x,y)$ 
such that  $\frac{d }{ds}|_{s=s_1}Q_s(x,y)\neq 0$ in $\mathbb{C}(x,y)$.\\
From the preceding lemma, we can suppose that  there is a non-trivial polynomial $P(x,y,z)$ such that $P(t,\alpha,\alpha')=0$.
\begin{lem} Suppose that the principal curvatures of $U_0$ are non-constant and that there is a non-zero polynomial $P$ such that $P(t,\alpha,\alpha')=0$, then 
$\alpha$ extends to a non-constant  multi-valued holomorphic algebraic function on $\mathbb{C}.$ \end{lem}
\begin{proof}

Let us show that there is a non-trivial polynomial $\tilde P$ such that,
$\tilde P(t,\alpha)=0$. If $deg_zP(x,y,z)=0$, then $P(x,y,z)=P(x,y)\neq 0$, by hypothesis, so we can choose $\tilde P = P$. 
Suppose  now that $deg_zP(x,y,z)=1$, then $P(x,y,z) = P_0(x,y) + P_1(x,y)z$
and by substitution we get 
$$P(t,\alpha,\alpha') = P_0(t,\alpha) + P_1(t,\alpha)\alpha'=
P_0(t,\alpha) + P_1(t,\alpha)Q_s(t,\alpha)=0.$$
Differentiating   with respect to $s$, we see that $P_1(t,\alpha)=0$ with $P_1$ non-trivial
(by iteration, the result extends  for any degree of $P$  with respect to $z$).
Furthermore, the existence of a non-trivial $P$ such that $P(t,\alpha)=0$ implies that
$\alpha$ is an algebraic function which  extends to an $m$-valued holomorphic algebraic function
on $\mathbb{C}$. This, in turn, lifts to a holomorphic function  defined on a  compact  Riemann surface  $\mathfrak{R}$ and  with  values  in the Riemann sphere $\mathbb{S}^2=\mathbb{C}\cup \{\infty\}=\mathbb{CP}^1$. The analytic continuation of $\alpha$, also denoted by $\alpha$,  covers  $\mathbb{S}^2$ $d$-times,
where $d\geq 1$  ($\alpha$ is assumed not to be constant).
\end{proof}
Let us now derive a contradiction in case the hypotheses of  the  preceding lemma hold. Suppose 
\begin{equation}\label{equa10}
 A_{s_1}(t,\alpha) + B_{s_1}(t,\alpha)\alpha'=A_{s_2}(t,\alpha) + B_{s_2}(t,\alpha)\alpha'     
\end{equation}
  but with non identical polynomials
$A_{s_i}(x,y)$ or $B_{s_i}(x,y), i =1,2.$
Hence, there is  
a linear form  $l_i(s_1)$ different from the $l_j(s_2),\ j=1,\ldots, n-1$.
A  zero of the  function $\alpha-l_i$, for some $i\in \{1,\ldots,n-1\}$, is a pole of $A_{s_1}(t,\alpha)+ \alpha'B_{s_1}(t,\alpha)$. Since all values are taken by 
$\alpha$, there is at least a $t_0\in \mathbb{C}$ such that 
$\alpha(t_0)-l_i(t_0)=0$. By equation \eqref{equa10},  $t_0$ is also a pole of  the RHS, i.e. a  zero of $\alpha -l_i(s_2)$ (up to a possible reindexing of $i$), so 
\begin{equation}\label{egalite10}
 t_0 (b_i(s_2)-b_i(s_1)) = c_i(s_2)-c_i(s_1).
\end{equation}
A local Laurent series expansion  around $t_0$  shows that  the orders of the corresponding poles are equal, i.e. $n_i=n'_i$, and that  
$\alpha'(t_0) +b_i = \alpha'(t_0) +b'_i$. Hence, necessarily,  $b_i=b'_i$
and from \eqref{egalite10}, $c_i=c'_i$ ($t_0\neq 0$).
This  contradicts the hypothesis on $l_i(s_1)$ and $l_i(s_2)$. Consequently 
$\left(b_i\left(s_1\right)),c_i\left(s_1\right)\right) = (b_{\sigma(i)}(s_2),c_{\sigma(i)}(s_2))$ for $i= 1,\ldots,n-1$ and  for some  permutation $\sigma \in \mathfrak{S}_{n-1}$.

We conclude that   
the principal curvatures of the $h$-level set $U_0$  must be  constant, and since its normal bundle is flat from Lemma \ref{para1}, 
$U_{0}$  is isoparametric (cf. introduction).
\end{proof}
\subsection{Proof of  the first part of Theorem \ref{letheoreme1} }
 Lemmas  \ref{para1}  till  \ref{paraODE} prove  Theorem \ref{letheoreme1}  except for the symmetries of $\Sigma^n$.
 Let $G$ be the subgroup  (possibly trivial) of the ambient isometries of $\mathbb{E}^{n+1} $ that preserve $U_0$.
Let $g\in G$  and $p\in \Sigma^n$. Then $p $ belongs to an integral curve of $\nabla h$
that cuts $U_0$ at point $p_0 \in U_0$. Let $q_0 :=g(p_0)\in U_0$. The normal plane  $N_{p_0}U_0$ is sent by the isometry $g$ 
to $g(N_{p_0}U_0)= N_{q_0}U_0$, and  the normalized mean curvature at $p_0$, $N(p_0)$, 
is sent to   $ N(q_0)$. Hence  we have also $g(e_n(p_0)) = e_n( q_0)$. By uniqueness of the ODE \eqref{courbeIntegrale} with given
initial conditions and by Lemma \ref{para5},
 the  integral curve $N_{p_0}U_0 \cap \Sigma^n$ is sent to
$g(N_{p_0}U_0 \cap \Sigma^n) = N_{q_0}U_0 \cap \Sigma^n.$ 
Consequently, $g(\Sigma^n)=\Sigma^n$.
%%%%%%%%%%%%%%
%%%%%%%%%%%%%
% As  we shall see in next section the  condition $\lambda_n =- \frac{n}{2}h$ defines an ODE of order 2 which determines $\alpha$.\\
  \section{Proof of  the last  part of Theorem \ref{letheoreme1} and Theorem \ref{theoreme2} }\label{constructionBCH}
  Conversely, Subsection \ref{ODE0} gives a method to generate proper 
   $BCH$ by evolution of a given  isoparametric codimension $2$ submanifold 
  $U_0 \subset \mathbb{E}^{n+1}$ in the direction of a unit normal  vector field to $U_0$.
     First let us illustrate the method in case the isoparametric  submanifold $U_0\subset \mathbb{E}^n\subset 
     \mathbb{E}^{n+1}$.
       \begin{prop} \label{SigmaU} Let  $U_0$ be  an  isoparametric and  holonomic hypersurface of
        $\mathbb{E}^n \subset \mathbb{E}^{n+1}$. There exists a proper  
 holonomic BCH   
containing $U_0$   and whose  $n$-plane containing $U_0$ is a plane of  symmetry.
\end{prop}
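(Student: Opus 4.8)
The plan is to construct $\Sigma^n$ as the normal evolution of $U_0$ furnished by Lemma \ref{paraODE}, with the evolution data arranged so that $\mathbb{E}^n$ becomes a mirror. Write $\nu$ for the unit normal of $U_0$ inside $\mathbb{E}^n$ and let $e_n$ be a fixed unit normal of the hyperplane $\mathbb{E}^n$ in $\mathbb{E}^{n+1}$; together they span $NU_0$ in $\mathbb{E}^{n+1}$. Starting from a curvature-line parametrization $Y$ of $U_0$ (available since $U_0$ is holonomic), I take the ansatz \eqref{surRev} with $N_0:=\nu$,
$$X(x_1,\dots,x_n)=Y(x_1,\dots,x_{n-1})+\alpha(x_n)\,\nu(x_1,\dots,x_{n-1})+x_n\,e_n .$$
The decisive observation is that, since $U_0$ lies in the hyperplane $\mathbb{E}^n$ and $e_n$ is parallel in $\mathbb{E}^{n+1}$, the shape operator of $U_0$ in the direction $e_n$ vanishes, so $\mu_{0i}=0$ for all $i$; and $U_0$ being isoparametric in $\mathbb{E}^n$ makes the $\lambda_{0i}$ constant. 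Consequently the rational function $R$ of Lemma \ref{paraODE} loses its explicit $x_n$-dependence, and the governing equation becomes the autonomous ODE
$$\alpha''=-\frac{1+\alpha'^2}{3}\sum_{i=1}^{n-1}\frac{\lambda_{0i}}{1-\alpha\lambda_{0i}} .$$
I fix $\alpha$ to be the unique analytic solution with $\alpha(0)=\alpha'(0)=0$ given by Cauchy–Lipschitz, the right-hand side being smooth near $\alpha=0$.

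Next I would check, on a small neighbourhood of $U_0$, that $\Sigma^n=X(\Omega)$ is a proper holonomic BCH containing $U_0$. Differentiating gives $X_{,i}=(1-\alpha\lambda_{0i})Y_{,i}$ for $i<n$ and $X_{,n}=\alpha'\nu+e_n$, so $X$ is an immersion for $x_n$ small and, $Y$ being orthogonal with $\nu,e_n\perp TU_0$, the coordinate net is orthogonal. Exactly as in the proof of Lemma \ref{paraODE}, the unit normal is $N=(\nu-\alpha' e_n)/\sqrt{1+\alpha'^2}$ from \eqref{unitNormal}, and $N_{,i}=-\lambda_i X_{,i}$, $N_{,n}=-\lambda_n X_{,n}$, so the coordinates are principal and $\Sigma^n$ is holonomic, with $\lambda_i=\lambda_{0i}/\bigl(\sqrt{1+\alpha'^2}\,(1-\alpha\lambda_{0i})\bigr)$ for $i<n$ and $\lambda_n=\alpha''/(1+\alpha'^2)^{3/2}$ coming from \eqref{lambda1}–\eqref{lambda2}. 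Each $\lambda_i$, hence $h$, is a function of $x_n$ alone, so $\nabla_{e_i}h=0$ for $i<n$; and the ODE defining $\alpha$ is precisely $\sum_{i<n}\lambda_i+3\lambda_n=0$, i.e. $nh+2\lambda_n=0$ in the $e_n$-direction. Thus every principal direction satisfies one of the two alternatives of Proposition \ref{biTan}, so $\Sigma^n$ is a BCH; it contains $U_0$ since $\alpha(0)=0$ gives $X(\cdot,0)=Y$.

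The heart of the matter is the symmetry, which follows once $\alpha$ is shown to be even. Because the ODE is autonomous and the data $\alpha(0)=\alpha'(0)=0$ are symmetric, $x_n\mapsto\alpha(-x_n)$ solves the same problem and hence equals $\alpha$ by uniqueness. As $Y,\nu\in\mathbb{E}^n$ while $e_n\perp\mathbb{E}^n$, the reflection $\sigma$ across $\mathbb{E}^n$ sends $X(x_1,\dots,x_{n-1},x_n)$ to $Y+\alpha(x_n)\nu-x_n e_n=X(x_1,\dots,x_{n-1},-x_n)$, so $\sigma(\Sigma^n)=\Sigma^n$ and $\mathbb{E}^n$ is a plane of symmetry. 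For properness, $\alpha''(0)=-\tfrac13\sum_i\lambda_{0i}\neq0$ whenever $U_0$ is not totally geodesic, so $\alpha\not\equiv0$ and the level sets genuinely move; were $h$ constant the integral curve would be a circular arc, and feeding $\alpha''=c(1+\alpha'^2)^{3/2}$ back into the evolution ODE yields a relation whose two sides depend only on $\alpha'$ and only on $\alpha$ respectively, forcing $\alpha'$ constant and hence $\alpha\equiv0$, a contradiction; so $h$ is non-constant and $\Sigma^n$ is proper. I expect the only genuinely delicate steps to be this reduction $\mu_{0i}=0$ — which is what makes the ODE autonomous and $\alpha$ even, and therefore produces the mirror symmetry — together with the bookkeeping that keeps $1-\alpha\lambda_{0i}>0$, and hence the immersion non-degenerate, on the interval of definition; both are controlled by shrinking the $x_n$-range around $\alpha(0)=0$.
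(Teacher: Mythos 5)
Your proposal is correct and follows essentially the same route as the paper: the same evolution ansatz \eqref{surRev}, the same key observation that $\mu_{0i}=0$ because $U_0$ sits inside the hyperplane $\mathbb{E}^n$ (so that $e_n$ is parallel), and the same reduction to the autonomous ODE \eqref{equaDiff}. The only divergence is in how that ODE is handled: the paper integrates it explicitly by quadrature, obtaining the first integral $1+\alpha'^2=\prod_{i=1}^{n-1}(1-\lambda_{0i}\alpha)^{2/3}$ and the elliptic-type formula \eqref{meridienne2}, with the mirror symmetry implicit in the $\pm$ branches, whereas you invoke Cauchy--Lipschitz uniqueness and deduce the symmetry from parity invariance of the autonomous equation --- an equally valid (and arguably more explicit) justification of the symmetry and properness claims, though it forgoes the closed-form solution.
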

%(See Appendix \ref{properBCHRevolution}   and Fig. 1 {\sl infra})

 \begin{proof}
We refer to notations of Subsection \ref{ODE0}.
We consider the parametrization \eqref{surRev2} where $ e_n$ is  a unit normal vector field to the n-plane containing $U_0$. 
The principal curvatures $\mu_i$, i.e. the eigenvalues of the shape operator $A_{ e_n} $, are zero. Hence, 
the second order ODE  \eqref{courbeIntegrale} which characterizes a BCH  becomes:
\begin{equation}\label{equaDiff}
\frac{\alpha''}{1+\alpha'^2}=-\frac{1}{3}\sum_{i=1}^{n-1}\frac{\lambda_{0i}}{1-\lambda_{0i}\alpha}= R(\alpha).
\end{equation}
The solution can be explicitly given by double integration.\\
 Let $\alpha' := u $ then 
 $$\alpha'' =\frac{du}{dx}= \frac{du}{d\alpha}\frac{d\alpha}{dx} = (1+u^2)R(\alpha).$$ 
 Hence,
 $$\frac{udu}{(1+u^2)}  = R(\alpha) d\alpha
 \ {\rm  and}\   \log\sqrt{1+u^2}=\int_.^\alpha R(t) dt +C_1$$
 Replacing $R$ by its expression, we obtain
 $$1+\alpha'^2 = C_2\prod_{i=1}^{n-1} (1-\lambda_{0i}\alpha)^{2/3}.$$
 The initial conditions
$\alpha(0) = \alpha'(0)=0$ implies $C_2=1$. Choose the orientation of the normal $N$ such that $h<0$. Then $ \prod_{i=1}^{n-1}  (1 -\lambda_{0i}\alpha)^{2/3} -1$
is nonnegative for $\alpha$ positive and
\begin{equation}\label{alphaPrime}
 \alpha' = \pm\sqrt{ \prod_{i=1}^{n-1}  (1 -\lambda_{0i}\alpha)^{2/3} -1},
 \end{equation}
  so 
   $$ \frac{d\alpha}{  \sqrt{ \prod_{i=1}^{n-1}  (1 -\lambda_{0i}\alpha)^{2/3} -1}} = \pm dx.$$
 A second integration yields an expression of the inverse  of the function $x = f(\alpha)$ and the value of $\lambda_n$
 from equation \eqref{lambda2}:
 %%%%%%
 \begin{equation}\label{meridienne2}
x = C_3 \pm \int_0^\alpha  \frac{dt}{  \sqrt{ \prod_{i=1}^{n-1}  (1 -\lambda_{0i}t)^{2/3} -1}} \ {\rm and} \
\lambda_n = 
\frac{1}{3}\left(\sum_{i=1}^{n-1} \frac{1}{1-\lambda_{0i}\alpha}\right) \prod_{i=1}^{n-1}(1-\alpha\lambda_{0i})^{-1/3}.
 \end{equation}
 
 %%%%%%
The initial condition
$\alpha(0)  =0$ implies $C_3=0.$
  Notice that the solution $x$ of equation \eqref{meridienne2}  is an   integral which is equivalent to $\int_0^\cdot \frac{dt}{  \sqrt{ t}}$, hence, integrable if 
$\sum_1^{n-1} \lambda_i \neq 0$ (
$\sum_1^{n-1} \lambda_i= 0$  then  $\lambda_n=0$ and $\Sigma^n$ would be  minimal). Notice also 
that the function $\alpha$ is similar to  an elliptic function in the sense that its inverse is an integral of an algebraic function.
 \end{proof} 
In the case where $U_0$ is a $(n-1)$-sphere invariant by $O(n)$,  the principal   curvatures $\lambda_{0i}$'s are all equal and we obtain
 BCHs that are   stable by  $O(n)$,        
 that are  of catenoidal type and that are described in details in \cite{MOR} (see also \cite{N}).
\begin{figure}[h!]\label{fig}
\begin{center}
\includegraphics[scale=0.8]{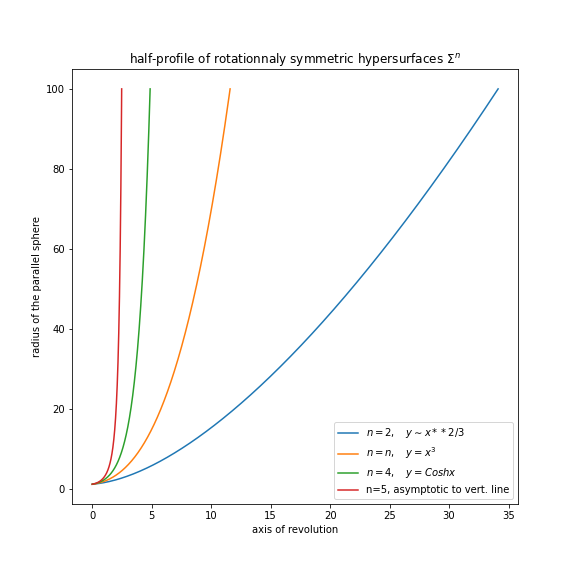} 
%\caption{ $x_n$-curves of BCHs $\mathbb{S}^{n-1}\times\mathbb{R}$}
\end{center}
\end{figure} 
 \newpage
Similarly, if $U_0 $ is a product of spheres $\mathbb{S}^p(r_1)\times \mathbb{S}^q(r_2)$,  $p+q=n-1$ 
then $U_0 \subset \mathbb{S}^n\left(\sqrt{r_1^2 + r_2^2}   \right) \subset \mathbb{E}^{n+1}$ is invariant by 
$O(p+1)\times O(q+1)$, hence $\Sigma^n$ is also invariant by $O(p+1)\times O(q+1)$. The parametrization in \eqref{surRev2}, where
$Y$ parametrizes $U_0$,  $N_0$ is the normalized mean curvature vector field 
of $U_0$-which  is the unit normal  vector field to $ \mathbb{S}^n\left(\sqrt{r_1^2 + r_2^2}\right)$, $e_n$ is orthonormal  to $N_0$ in $NU_0$ 
and  $\alpha$ satisfies  the solvable  ODE \eqref{courbeIntegrale},    determines the proper BCHs  that
 were already 
described in  \cite{MOR}.\\ 
\\
These are the only cases   to be considered as we shall see now.
% \begin{cor} A level surface of the mean curvature function ($u_t$) is isoparametric
% \end{cor}
%%%%%%%%%%%%%%%%%%%%%%%%%%%%%%%%%%%%%%%%%%%%
The fact that the level sets of the mean curvature function $h$ of a holonomic BCH are isoparametric,  is very restrictive. We refer to  \cite{CR} and \cite{T} for the following assertions  on isoparametric
submanifolds of the Euclidean space.
 Let $l$ be the number of distinct principal curvatures
of a codimension 2 isoparametric submanifold  $U_0\subset \mathbb{E}^{n+1}$. If $l=1$, then $U_0$ is a piece of a sphere or  a plane.
If $l=2$, then $U_0$ is a product of spheres or spheres and planes.  In both  cases, we recover the ones of Section \ref{constructionBCH} 
already described in \cite{MOR}.
 If $l\geq 3$, then $U_0$ is not holonomic (cf. for example \cite{DT}). Hence, from Theorem \ref{letheoreme1} 
all the proper holonomic BCHs have been described, but none of them is a proper BBH 
because the ODE  \eqref{courbeIntegrale} and  the  normal biharmonic equation \eqref{System3}  are not compatible. 
This was already proved in \cite{MOR} thus Theorem \ref{theoreme2} follows.
 \section{A question}
 Although all known examples of proper BCH are holonomic, the holonomic hypothesis is strong. 
 So the question is: can we extend the method of construction of proper BCH  described above  to non-holonomic isoparametric codimension 2 
 submanifolds?  The simplest  candidate  is  a projective tube in $ \squatre$
  i.e. a tubular neigbourhood of an embedding of $\mathbb{RP}^2\subset \mathbb{S}^4,$ 
that is  a non-holonomic  isoparametric codimension 2 submanifold of $\mathbb{E}^5$ that we describe now.
\begin{rem}
The embedding of $\mathbb{RP}^2$  is given in \cite{CA}  without proof so let us give an explicit  description.
  $\mathbb{RP}^2$ is embedded   into a round sphere of radius $A$, by  $X:\rpdeux \longrightarrow \mathbb{S}^4(A)\subset \mathbb{E}^5$, a Veronese-type embedding with 
   unknown coefficients $a,b,c,d,e,f$ and $A$ where 
  $$ X: (x,y,z) \mapsto (yz,xz,xy, ax^2+by^2+cz^2,dx^2+ey^2+fz^2).$$
 Since the codomain of $X$ is $ \squatre(A) $ then  necessarily $a=A\cos\alpha,d=A\sin\alpha,b=A\cos\beta,e=A\sin\beta,c=A\cos\gamma,f=A\sin\gamma$
  with $\alpha =0,\beta = -2±\pi/3,\gamma =2\pi/3$. This  forces $A= 1/\sqrt{3}$ and 
  
 yields
 % \begin{equation}
% \begin{array}{cc} 
  %  X(x,y,z) =  (yz,xz,xy, &\frac{\cos(\gamma)}{\sqrt{3}}(x^2-(y^2+z^2)/2)+ \sin \gamma(z^2-y^2)/2,  \\
  %\frac{\sin(\gamma)}{\sqrt{3}}(x^2 -(y^2+z^2)/2 & + \cos(\gamma)(y^2-z^2)/2 )
 % \end{array}
  %\end{equation}
    $$X(x,y,z) = (yz,xz,xy, \frac{1}{\sqrt{3}}(x^2-(y^2+z^2)/2),  \frac{1}{2}(-y^2+z^2) ).$$

  Parametrize with $x=\cos\theta,y=\sin\theta\cos\phi$ and $z=\sin\theta\sin\phi$, we obtain:
 % $$ X=\frac{\sqrt{3}}{2}\left(\sin^2\theta\sin(2\phi),\sin(2\theta)\sin\phi,\sin(2\theta)\cos\phi, \frac{1}{\sqrt{3}}(3\cos^2\theta-1), -\sin^2\theta\cos(2\phi) \right) $$
 %   $$ X=\frac{\sqrt{3}}{2}\left(\theta^2 2\phi ,2\theta\phi,2\theta(1-\phi^2/2), \frac{1}{\sqrt{3}}(3(1-\theta^2/2)-1), -\theta^2(1-2\phi^2) \right) $$

\begin{equation}
X (\theta,\phi)=
\frac{\sqrt{3}\sin^2\theta }{2}
\left(
\begin{array}{c}
e^{-2i\phi}\\
0\\
0
\end{array}
\right)+
\frac{\sqrt{3}\sin 2\theta }{2}
\left(
\begin{array}{c}
0\\
e^{i\phi}\\
0
\end{array}
\right)+
\frac{(3\cos^2\theta -1) }{2}
\left(
\begin{array}{c}
0\\
0\\
1
\end{array}
\right).
\end{equation}
\end{rem}

%%%%%%%%%%%%%%%%%%
%%%%%%%%%%%%%%%%%%%%%%%%%%%%%%%%%%%%%%%%%%%%%
%%%%%%%%%%%%%%%%%%%%%%%%%%%%%%%%%%%%%%%%%%%%%%

 %%%%%%%%%%%%%%%%%%%%%%%%%%%%%%

%%%%%%%%%%%
\section{Appendix:  Codazzi's equations   and  curvature line coordinates  }\label{cristoffel}

One deduces directly from Codazzi's equations two key properties  related to the 
principal curvatures of a hypersurface $\Sigma^n\subset \mathbb{E}^{n+1}$. 
  Recall that  the shape operator in the normal direction $N$, $A_N$,  decomposes $T\Sigma^n$ into distributions 
  $T_i,$ where   $T_i = ker(A_N(p)-\lambda_i Id)$ and  $\lambda_i $ 
   is the corresponding eigenvalue -or principal curvature-   of multiplicity $rk(T_i)=m_i.$   We will select  an open
    subdomain  where  the multiplicities $m_i,\   i=1,\ldots, s,$ of the corresponding 
   eigenvalues  are constant.  Then we can split   the tangent bundle $T\Sigma^n$   into  a family of integrable 
   distributions  $T_i\subset T\Sigma^n $. 
 Let 
 $X,Y \in T\Sigma^n $ two tangent vector fields and let $A$ be the shape operator of $\Sigma^n$, then Codazzi's equation is 
  $$\nabla_X(A)Y  =   \nabla_Y(A)X,$$ where 
  $\nabla_X(A)Y  = \nabla_X(AY) - A(\nabla_X Y).$
 \begin{enumerate}
 \item   Let  $T_\lambda$ be  the  distribution  which corresponds to the  eigenvalue $\lambda$ of the shape operator $A$  of $\Sigma^n\subset \mathbb{E}^{n+1}$
   of  locally constant rank larger than 2, then it is integrable 
   and defines a curvature surface $S(\lambda)$ on which $\lambda$ is constant .\\
   Indeed, let
$X,Y \in T_\lambda$ be linearly independent  vector fields in the given distribution $T_\lambda$. Then from  the  Codazzi equation:
 
\begin{equation}\label{coda} 
 \nabla_X(A)Y = \nabla_Y(A)X,
 \end{equation}
so
 $$(X\lambda)Y -(Y\lambda)X = (A-\lambda Id)[X,Y].$$
 As the LHS belongs to $ker(A-\lambda Id) $ and RHS belongs to 
 $Im(A-\lambda Id)$,  $X(\lambda)=Y(\lambda) = 0$ and $[X,Y] \in T_\lambda$.
  Thus, from Frobenius, the distribution associated to the principal eigenvalue $\lambda$ is integrable into a curvature surface    $S(\lambda)$ onto which   $\lambda$ is constant.
 \item   $S(\lambda)$ is totally umbilical, i.e. the second fundamental form of $S(\lambda)$ is equal to 
 $$II(X,Y) = \langle X, Y\rangle H, \forall X,Y \in TS(\lambda), $$
 where $ H$ is the mean curvature vector field of $S(\lambda)$.\\
 Indeed, 
 let $X$ be a  vector field in $TS(\lambda)$ and let $\eta$ be  a vector field normal to $S(\lambda)$ 
 such that $A\eta =\mu\eta$ and $\mu \neq \lambda.$ Then
  $$\nabla_X(A)\eta = \nabla_\eta(A)X.$$ 
  Expanding 
  $$(\mu Id -A)(\nabla_X\eta) =
  (\eta\cdot\lambda)X- (A-\lambda Id) (\nabla_\eta X) - (X\cdot\mu)\eta.$$
  The projection  operator onto $TS(\lambda)$ commutes with $A$ hence 
  $$(\mu Id -A)(\nabla_X\eta)^{TS(\lambda)} =
  (\eta\cdot\lambda)X- (A-\lambda Id) (\nabla_\eta X)^{TS(\lambda)} = (\eta\cdot\lambda)X.$$
 Hence, 
\begin{equation}\label{total}
 (\nabla_X\eta)^{TS(\lambda)} =\frac{(\eta\cdot \lambda) X}{\mu - \lambda}.
 \end{equation}
Let   $\{e_k\}_{k=1,\ldots ,l}$ be  an orthonormal  basis of $TS(\lambda)^\perp\cap T\Sigma^n$ in the principal curvature directions,
so that $(\nabla_{e_k} N )^{TS(\lambda)}= -\lambda_k e_k,$
where $N$ is a unit normal  vector  field to  $T\Sigma^n$, then the  second fundamental form of $S(\lambda) \subset \mathbb{E}^{n+1},$ 
according to equation \eqref{total}, is  equal to
$$II(X,Y) = \sum_{k=1}^n - \langle \nabla_{X} e_k,Y\rangle e_k -  \langle \nabla_{X}N,Y\rangle N
=     \sum_{k=1}^n \frac{e_k\cdot\lambda}{\lambda-\lambda_k }\langle X,Y\rangle e_k  - \lambda  \langle X,Y\rangle  N.      $$
Consequently, 
$$ II(X,Y) = \langle X,Y\rangle H, \quad  H= \sum_{i=1}^l  \frac{(e_i \cdot \lambda)}{\lambda_i - \lambda} e_i -
 \lambda N.  $$
where $H$ is the mean vector field of $S(\lambda)\subset \mathbb{E}^{n+1}$. $S(\lambda)$ is thus totally umbilical (see next Appendix \ref{spheres}).
\end{enumerate}
 
  Let 
 $\Sigma^n$ be parametrized by  an  immersion 
\begin{equation}
\begin{array}{cc}
 X:&\Omega \subset \mathbb{R}^n\longrightarrow \Sigma^n \subset \mathbb{R}^{n+1} \\
&(x_1,\ldots,x_n) \mapsto X(x_1,\ldots,x_n),
\end{array}
\end{equation}
where $\Omega$ is an open domain, and
$(x_1,\ldots, x_n)$ form a system of orthogonal coordinates. Then the metric $g$ in these coordinates is diagonalized as follows:
\begin{equation}
g_{ij} =\delta_{ij}E_i= \delta_{ij}v_i^2 .
\end{equation}
 The Christoffel Symbols of the metric $g$ are:
 $$\Gamma^i_{jk} = \frac{g^{ii}}{2}\left( g_{ki,j} +  g_{ij,k} -g_{jk,i}  \right) = \frac{1}{2E_i}\left( \delta_{ki}E_{k,j}  + \delta_{ij}E_{i,k} -\delta_{jk}E_{j,i}  \right). $$
If $i\neq j\neq k$  then $\Gamma^i_{jk} = 0$.\\
Hence, the only possible  nonzero Christoffel symbols up to permutation on lower indices are:\\
\begin{itemize}
    \item If $i = j \neq k$,  then
    $
    \Gamma^i_{ik} =  \frac{E_{i,k}}{2E_i} =(\log v_i)_{,k}.
    $
    \item If $j= k \neq i$, then $\Gamma^i_{jj} =  -\frac{E_{j,i}}{2E_i}=-(\log v_j)_{,i}\frac{v_j^2}{v_i^2}. $
\item If $j= k = i$, then $\Gamma^i_{ii} =  \frac{E_{i,i}}{2E_i}=(\log v_i)_{,i}$.
\end{itemize}
In the frame of this coordinate system where 
$e_i = \partial_i, \ i=1,\ldots, n$ the connection on $T\Sigma^n$ is given by the Cristoffel symbols 
\begin{equation}\label{nabla}
\nabla_{e_i}e_k = \Gamma_{ik}^\alpha e_\alpha =\Gamma_{ik}^i e_i +\Gamma_{ik}^k e_k.
\end{equation}

In the holonomic case, when the curvature lines form a coordinate system,
 the shape operator $A$
 is diagonalized and related to the second fundamental form $II$  as follows
$$ A_i^j = \delta_i^j \lambda_i\ {\rm and}\ II_{ij} = A^\alpha_i g_{\alpha j}=  \delta_{ij}\mu_i,$$
where $\{\lambda_i\}_{i=1}^n$ are  the  principal curvatures
and  $\mu_i = \lambda_iv_i^2$.
%More generally, choose a unit normal direction $\zeta$ ie a section of the normal bundle $N\Sigma$ and diagonalize the shape
%operator in the $\zeta$ direction: $A_\zeta$ giving 
%other eigenvalues  in the direction $\zeta$. In case of hypersurface the choice of $\zeta$ is unique up to sign.\\
%We will express the fundamental structure  equations  of an isometric immersion (Gauss and Codazzi's) in terms of these %coordinates. 
The Codazzi equations   are equivalent to the following  system of
$n(n-1)$ equations:
\begin{equation}\label{codazzi}
\lambda_{i,j}-(\lambda_j -\lambda_i) \Gamma^i_{ij}=0,\quad \forall i\neq j\quad  i,j \in \{1,\ldots, n\}.
\end{equation}

 \section{Appendix  : totally umbilical submanifolds of $\mathbb{E}^{n+1}$  }\label{spheres}
Let us recall that 
a submanifold $\Sigma^n$  is umbilical at $p\in \Sigma^n$ if there is a unit normal  field $N(p)\in N_p\Sigma^n$ and curvature 
$\lambda\in\mathbb{R}$  such that:
$$II(p)(X,Y) = \langle X,Y\rangle \lambda(p) N(p)\qquad  \forall X,Y\in T_p\Sigma^n. $$
By definition, $\Sigma^n$ is   totally umbilical if   it is  umbilical at each point of $\Sigma^n$.
 \begin{lem}\label{lemma6}
 A totally umbilical hypersurface  $\Sigma^n \subset \mathbb{E}^{n+1}$ is a piece of a hypersphere or a hyperplane.
 \end{lem}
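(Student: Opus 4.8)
The plan is to show first that the umbilicity function $\lambda$ is locally constant, and then to realize $\Sigma^n$ explicitly as a level set of an affine function (when $\lambda\equiv 0$) or of the squared distance to a fixed point (when $\lambda\neq 0$). I assume $n\geq 2$; the case $n=1$ reduces to the classical fact that a plane curve of constant curvature is an arc of a circle or a straight line.

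By total umbilicity the shape operator is $A=\lambda\,\mathrm{Id}$ for a smooth function $\lambda$. Substituting this into the Codazzi equation \eqref{coda}, the computation already performed in Appendix \ref{cristoffel} gives, for any tangent fields $V,W$,
$$(V\lambda)\,W-(W\lambda)\,V=(A-\lambda\,\mathrm{Id})[V,W]=0 .$$
Taking $V,W$ linearly independent, which is possible since $n\geq 2$, forces $V\lambda=W\lambda=0$ in every direction, so $d\lambda=0$ and $\lambda$ is locally constant.

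Suppose first $\lambda\equiv 0$. Then the Weingarten map vanishes, so $D_V N=-A V=0$ for every tangent $V$, where $D$ is the ambient connection; hence the unit normal $N$ is a fixed vector. Writing $X$ for the position vector, the function $f:=\langle X,N\rangle$ satisfies $df(V)=\langle V,N\rangle+\langle X,D_V N\rangle=0$, so $f$ is constant and $\Sigma^n$ sits inside the hyperplane $\{y:\langle y,N\rangle=f\}$. Suppose instead $\lambda$ is a nonzero constant, and set $c:=X+\lambda^{-1}N$. Using $D_V X=V$ and $D_V N=-\lambda V$ we get
$$D_V c=V+\lambda^{-1}(-\lambda V)=0 ,$$
so $c$ is a fixed point; then $|X-c|^2=\lambda^{-2}$ is constant and $\Sigma^n$ lies on the hypersphere of center $c$ and radius $1/|\lambda|$.

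The one genuinely essential step is the constancy of $\lambda$: it uses both the Codazzi equation and the hypothesis $n\geq 2$, and it is what makes the subsequent dichotomy clean, since there is then no partial zero set of $\lambda$ to analyze. Once $\lambda$ is constant, the two remaining verifications are simply the observations that $N$ (resp. the center $c$) is parallel in $\mathbb{E}^{n+1}$, which are immediate one-line differentiations.
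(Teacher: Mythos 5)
Your proof is correct and takes essentially the same route as the paper's: the Codazzi equation (the computation of Appendix~\ref{cristoffel}, item~1, applied with $T_\lambda = T\Sigma^n$) forces $\lambda$ to be constant, and then the point $X+\lambda^{-1}N$ (the paper's $C(0)$, obtained there by integrating $\nabla_T N=-\lambda T$ along curves) is fixed, so $\Sigma^n$ lies on a hypersphere of radius $1/|\lambda|$, with the hyperplane case when $\lambda=0$. Your direct differentiation of $c=X+\lambda^{-1}N$ and of $\langle X,N\rangle$ is just the infinitesimal form of the paper's integration step, so the two arguments coincide in substance.
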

 \begin{proof}
 Let $N$ be a unit normal field to $\Sigma^n$.
 Let $\gamma(s)$ be  a curve in  $\Sigma^n$ and $T$ be the unit  vector field tangent  to $\gamma(s)$ then as $\frac{DN}{ds} $  and  
  $ \nabla_T N$ are both   orthogonal to $N$, we obtain
 
 \begin{equation}\label{normal1}
D_T N=  \frac{DN}{ds} = \nabla_T N \end{equation}

and since  any direction in $T\Sigma^n$  is a principal curvature direction,
 \begin{equation}\label{normal2}
 \nabla_T N= -\lambda T.
\end{equation}
From Appendix \ref{cristoffel} 1,   $\lambda$ is constant and integrating  Equation \eqref{normal2}, 
 $$N(s)-N(0) =-\lambda \int  \frac{DX}{ds}  ds = -\lambda\left( X(s)-X(0)\right).$$
 Hence, if $\lambda \neq 0$ we obtain
 $$X(s) = X(0) - \frac{1}{\lambda}(N(s)-N(0) := C(0) -\frac{1}{\lambda}N(s). $$
 Thus  $\Sigma^n \subset \mathbb{S}^{n}(C(0),\frac{1}{\lambda}).$
 If $\lambda =0$, $\Sigma^n $ is a piece of a  hyperplane.
 \end{proof}
 More generally
 \begin{lem}\label{7}
 A totally umbilical submanifold   $\Sigma^k \subset \mathbb{E}^{n+1}$, $k>1$  with the extra condition  that the 
 mean curvature is parallel  is a piece of a round sphere.
 \end{lem}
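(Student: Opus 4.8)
The plan is to exploit the parallel mean curvature hypothesis twice: once to pin $\Sigma^k$ onto a fixed hypersphere of $\mathbb{E}^{n+1}$, and once to reduce the codimension so that the hypersurface case, Lemma \ref{lemma6}, applies verbatim.

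First I would record the umbilic data: writing $II(V,W)=\langle V,W\rangle H$ for tangent fields $V,W$, the shape operator in a normal direction $\xi$ is $A_\xi=\langle H,\xi\rangle\,\mathrm{Id}$, so the Weingarten formula becomes $D_V\xi=-\langle H,\xi\rangle V+\nabla^\perp_V\xi$. Taking $\xi=H$ and using that $H$ is parallel ($\nabla^\perp H=0$) gives $D_V H=-|H|^2V$. Hence $V(|H|^2)=2\langle D_V H,H\rangle=-2|H|^2\langle V,H\rangle=0$, since $V\perp H$; thus $|H|\equiv c$ is constant, and I assume $c>0$ (if $c=0$ then $II\equiv0$ and $\Sigma^k$ is a piece of a $k$-plane, the degenerate alternative). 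Since $D_V X=V$ for the position vector $X$, the point $O:=X+\tfrac1{c^2}H$ satisfies $D_V O=V-V=0$ for every tangent $V$, so $O$ is a fixed point of $\mathbb{E}^{n+1}$ and $|X-O|^2=\tfrac1{c^4}|H|^2=\tfrac1{c^2}$. Therefore $\Sigma^k$ lies on the hypersphere $\mathbb{S}^n(O,1/c)\subset\mathbb{E}^{n+1}$.

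Next I would reduce the codimension. By umbilicity the first normal space is $N_1=\mathrm{span}\{II(V,W)\}=\mathbb{R}H$, a line subbundle of the normal bundle, and it is \emph{parallel} for $\nabla^\perp$ precisely because $H$ is parallel. Dually, every normal field $\xi$ with $\langle\xi,H\rangle=0$ has $A_\xi=0$, and $\langle\nabla^\perp_V\xi,H\rangle=V\langle\xi,H\rangle-\langle\xi,\nabla^\perp_V H\rangle=0$, so the complement $H^\perp$ is a parallel normal subbundle along which $\Sigma^k$ carries no second fundamental form. By the classical reduction of codimension theorem (Erbacher), a submanifold whose first normal bundle is parallel of constant rank $1$ is contained in a totally geodesic $(k+1)$-dimensional affine subspace $\mathbb{E}^{k+1}\subset\mathbb{E}^{n+1}$; equivalently, one sees directly from the computation above that $\Sigma^k$ is totally geodesic inside $\mathbb{S}^n(O,1/c)$, hence a great subsphere.

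Finally, inside $\mathbb{E}^{k+1}$ the submanifold $\Sigma^k$ is a totally umbilic \emph{hypersurface} whose mean curvature is parallel, hence constant and (since $c>0$) nonzero. Lemma \ref{lemma6} then forces $\Sigma^k$ to be a piece of a hypersphere of $\mathbb{E}^{k+1}$, i.e. a piece of a round $k$-sphere, the hyperplane alternative of that lemma being excluded by $c>0$. The main obstacle is the reduction-of-codimension step: one must check that the rank-one first normal bundle is genuinely $\nabla^\perp$-parallel and then invoke (or, in this umbilic setting, reprove) that a parallel first normal bundle confines $\Sigma^k$ to a fixed affine subspace. This is exactly where the hypothesis that the mean curvature is parallel — which fails for a general totally umbilic submanifold of codimension $\geq 2$ — is indispensable, ruling out, for instance, umbilic but non-spherical submanifolds of varying mean curvature direction.
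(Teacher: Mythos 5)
Your proof is correct, and its first half coincides with the paper's: both arguments trap $\Sigma^k$ on a fixed hypersphere, your constant center $O=X+\tfrac{1}{c^2}H$ being exactly the integrated form of the paper's relation $\nabla_T N=-\lambda T$ for $N=H/h$ (there $\lambda=h=c$ and the center is $C(0)=X+\tfrac{1}{h^2}H$). The genuine difference is in the codimension-reduction step. The paper handles it with a self-contained, elementary device: the $(k+1)$-vector field $W:=T_1\wedge\cdots\wedge T_k\wedge N$ is shown to be parallel in $\mathbb{E}^{n+1}$ (the terms $T_1\wedge\cdots\wedge D_{T_i}T_\alpha\wedge\cdots\wedge N$ die because $\langle D_{T_i}T_\alpha,T_\alpha\rangle=0$ and, by umbilicity, the normal part of $D_{T_i}T_\alpha$ is along $N$; the last term dies because $D_{T_i}N=-\lambda T_i$), so the osculating $(k+1)$-plane $P$ is constant and $\Sigma^k\subset S^n\cap P$, a round $k$-sphere. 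You instead invoke Erbacher's reduction-of-codimension theorem for the rank-one parallel first normal bundle $\mathbb{R}H$ (or, alternatively, the observation that $\Sigma^k$ is totally geodesic in $\mathbb{S}^n(O,1/c)$ together with the classical fact that totally geodesic submanifolds of round spheres are great subspheres), and then finish by applying Lemma \ref{lemma6} inside the reduced ambient space $\mathbb{E}^{k+1}$. Both routes are valid: the paper's wedge argument buys self-containedness — it is in effect a proof of the reduction theorem in this rank-one umbilic situation — whereas your route buys brevity, makes explicit the structural reason the reduction works (parallelism of the first normal bundle, which is precisely where the hypothesis $\nabla^\perp H=0$ enters), and treats the degenerate case $H\equiv 0$ explicitly (a piece of a $k$-plane, matching the Corollary that follows the lemma), a case the paper's choice $N=H/h$ tacitly excludes.
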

 \begin{proof}
 Let $(T_1,\ldots, T_k) $ be an orthonormal basis at some point extended to orthonormal frames  around some point.
 The unit normal vector field  $N =\frac{H}{h}$  seen as a section of the normal bundle of $ \Sigma^k $ is  parallel, hence
 for any $i = 1,\ldots, k$,  $(D_{T_i}N)^\perp=0$ 
 from which we deduce equation \eqref{normal2}  and, as in Lemma \ref{lemma6},  that  
  $\Sigma^k\subset \mathbb{S}^{n}(C(0),\frac{1}{\lambda})$.\\
  On the other hand, consider the $(k+1)$-vector field $ W:=T_1\wedge\cdots\wedge T_k\wedge N$;
 then    
 $$D_{T_i} W = \sum_{\alpha =1}^k T_1\wedge\cdots\wedge  D_{T_i}T_\alpha \wedge\cdots \wedge T_k\wedge N+
  T_1\wedge\cdots\wedge T_k\wedge D_{T_i}N,\quad i\in\{1,\ldots,k\}.$$
  The last term of the RHS is zero because $D_{T_i}N = -\lambda T_i$,  and 
  the first term is also zero since $D_{T_i}T_j \perp T_j$.
 Hence $D_{T_i} W = 0$ and  $W$ is constant on $\Sigma^k$ which means $\Sigma^k$ is in a $(k+1)$-plane $P$.
Finally  $\Sigma^k$ is in $S^n\cap P$ which is a round $k$-sphere. \end{proof}
Furthermore,  it is shown (cf. for example Prop. 1.19 in \cite{DT}) 
 that the mean curvature vector field of any umbilical submanifold of dimension  at least two is  parallel. \\
 We then deduce from Lemma \ref{7}:
 \begin{cor}
 A totally umbilical submanifold   $\Sigma^k \subset \mathbb{R}^{n+1}$, $k>1$  is a piece of  a sphere or a  plane.
 \end{cor}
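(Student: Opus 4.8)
The plan is to obtain the corollary as an immediate consequence of Lemma \ref{7}, after removing its auxiliary hypothesis by means of the result quoted just above the statement and then treating the flat case by hand.

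First I would invoke Prop. 1.19 of \cite{DT}, recalled immediately before the corollary: the mean curvature vector field $H$ of any totally umbilical submanifold of dimension $k \geq 2$ is parallel in the normal bundle. A parallel section of the normal bundle has constant norm, so on the connected piece $\Sigma^k$ either $H$ vanishes identically or $H$ is nowhere zero.

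In the nowhere-zero case, $N := H/|H|$ is a well-defined parallel unit normal field, so the hypothesis of Lemma \ref{7} is met and $\Sigma^k$ is a piece of a round $k$-sphere. In the remaining case $H \equiv 0$, the umbilicity relation $II(X,Y) = \langle X,Y\rangle \lambda N$ forces the umbilicity constant $\lambda$ to vanish, hence $II \equiv 0$ and $\Sigma^k$ is totally geodesic. To finish I would reuse the frame argument from the proof of Lemma \ref{7}: for a local orthonormal tangent frame $(T_1,\ldots,T_k)$, the vanishing of $II$ yields $D_{T_i}T_j \in T\Sigma^k$ for all $i,j$, so the decomposable $k$-vector $T_1 \wedge \cdots \wedge T_k$ is parallel along $\Sigma^k$ and therefore constant; the tangent $k$-plane is thus fixed and $\Sigma^k$ lies in a single affine $k$-plane. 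There is essentially no obstacle here: the one point needing the cited input is to rule out an intermediate regime where $H$ vanishes on a proper subset, and this is exactly what the constancy of $|H|$ provides.
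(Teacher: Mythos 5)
Your proposal is correct and takes essentially the same route as the paper, which likewise deduces the corollary by combining Prop.~1.19 of \cite{DT} (parallelism of $H$ for umbilical submanifolds of dimension at least two) with Lemma \ref{7}. Your explicit dichotomy via the constancy of $|H|$, with the totally geodesic case settled by reusing the wedge-product argument of Lemma \ref{7}, merely spells out the ``plane'' alternative that the paper leaves implicit (and which Lemma \ref{7}'s use of $N=H/h$ does not itself cover), so it is a slightly more careful write-up of the same proof.
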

 Note that    a helix with constant curvature and nonzero torsion  is umbilical,
  but the mean curvature vector is not parallel and the curve is not a circle unless the torsion is  null, i.e. unless the normal bundle is  flat.

\textbf{Acknowledgements:} The authors would like to thank Cezar Oniciuc for carefully reading our paper and for his fruitful comments and suggestions.

\noindent{\fontsize{8}{8} \selectfont INSTITUT DENIS POISSON, CNRS UMR 7013, UNIVERSITE DE TOURS, UNIVERSITE D'ORLEANS, PARC DE GRANDMONT 37200 TOURS, FRANCE}\\
{\it Email address: } {\tt  hiba\_bibi95@hotmail.com}\\
\noindent{\fontsize{8}{8} \selectfont  INSTITUT DENIS POISSON, CNRS UMR 7013, UNIVERSITE DE TOURS, UNIVERSITE D'ORLEANS, PARC DE GRANDMONT 37200 TOURS, FRANCE}\\
{\it Email address: } {\tt  marc.soret@idpoisson.fr}\\
\noindent{\fontsize{8}{8} \selectfont  UNIVERSITE PARIS EST CRETEIL, CNRS LAMA, F-94010 CRETEIL, FRANCE}\\
{\it Email address: } {\tt  villemarina@yahoo.fr}


\begin{thebibliography}{9}

\bibitem{AM}
Akutagawa, K., \& Maeta, S.: Biharmonic properly immersed submanifolds in Euclidean spaces. Geometriae dedicata, 164(1), 351-355 (2013).

\bibitem{CMOP}
Caddeo, R., Montaldo, S., Oniciuc, C., \& Piu, P.: Surfaces in three-dimensional space forms with divergence-free stress-bienergy tensor. Annali di Matematica Pura ed Applicata, 193, 529-550 (2014).

\bibitem{CA}
Cartan, É.: Familles de surfaces isoparamétriques dans les espaces à courbure constante. Annali di Matematica Pura ed Applicata, 17(1), 177-191 (1938).


\bibitem{CR}
  Cecil  T, Ryan P.: \textit{Geometry of Hypersurfaces.}  Springer Monographs in Mathematics (SMM)
 (2015).

 \bibitem{C1} 
 Chen, B. Y.: \textit{Total mean curvature and submanifolds of finite type}. World Scientific Publishing Company, Vol. 27 (2014).

 \bibitem{C0} 
Chen, B. Y.: Some open problems and conjectures on submanifolds of finite type. Soochow J. math, 17(2), 169-188 (1991).

\bibitem{Chen-Ishikawa}
Chen, B.-Y., Ishikawa, S.: Biharmonic surfaces in pseudo-Euclidean spaces. Mem. Fac. Sci. Kyushu Univ. Ser. A 45, 323–347 (1991).

% \bibitem{C3} 
%Chen, B. Y.: On the total curvature of immersed manifolds, VI: Submanifolds of finite type and their applications. Bull. Inst. Math. Acad. Sinica, 11(3), 309-328 (1983).


%\bibitem{CI}
%Chen, B. Y., \& Ishikawa, S.: Biharmonic surfaces in pseudo-Euclidean spaces. Memoirs of the Faculty of Science, Kyushu University. Series A, Mathematics, 45(2), 323-347 (1991).

 \bibitem{DT} 
Dajczer, M., \& Tojeiro, R.: \textit{Submanifold theory}. 
 Springer US (2019).

\bibitem{D}
Dimitric, I.: Submanifolds of $E^ m$ with harmonic mean curvature vector. Bull. Inst. Acad. Sinica, 20, 53-65  (1992). 

\bibitem{Fu}
Fu, Y., Hong, M. C., \& Zhan, X.: Biharmonic conjectures on hypersurfaces in a space form. Transactions of the American Mathematical Society, 376(12), 8411-8445 (2023).

\bibitem{F} 
Fu, Y., Hong, M. C., \& Zhan, X.: On Chen's biharmonic conjecture for hypersurfaces in $\mathbb{R}^5$. Advances in Mathematics, 383, 107697 (2021).

\bibitem{HV}
Hasanis, T., \& Vlachos, T.: Hypersurfaces in E4 with harmonic mean curvature vector field. Mathematische Nachrichten, 172(1), 145-169 (1995).

\bibitem{Jiang}
Jiang, G.-Y.: Some nonexistence theorems on 2-harmonic and isometric immersions in Euclidean space. Chin. Ann. Math. Ser. B 8(3), 377–383 (1987).

\bibitem{EL}
Jiang, G. Y.: 2-harmonic maps and their first and second variational formulas. Chinese Ann. Math. Ser A, 7, 389-402 (1986).

%\bibitem{EL2}
%Eells, J.: \textit{Selected topics in harmonic maps}. In Amer. Math. Soc. (1983).

 %\bibitem{ES} 
%Eells, J., \& Sampson, J. H.: Harmonic mappings of Riemannian manifolds. American journal of mathematics, 86(1), 109-160 (1964).




%\bibitem{J} 
%Jiang, G. Y.: 2-harmonic maps and their first and second variational formulas. Chinese Ann. Math. Ser A, 7, 389-402 (1986).


%\bibitem{J2}
%Jiang, G. Y.: Some non‐existence theorems of 2‐harmonic isometric immersions into Euclidean spaces. Chin. Ann. Math. Ser. A, 8, 376 (1987).

\bibitem{Rec}
Meumertzheim, M., Reckziegel, H., \& Schaaf, M.: Decomposition of twisted and warped product nets. Results in Mathematics, 36, 297-312 (1999).

\bibitem{MO}
Montaldo, S., \& Oniciuc, C.: A short survey on biharmonic maps between Riemannian manifolds. Revista de la Unión Matemática Argentina, 47(2), 1-22 (2006).

\bibitem{MOR}
Montaldo, S., Oniciuc, C., \& Ratto, A.: Proper biconservative immersions into the Euclidean space. Annali di Matematica Pura ed Applicata, 195, 403-422  (2016).

\bibitem{N}
Nistor, S.: \textit{Biharmonicity and biconservativity topics in the theory of submanifolds.} Doctoral dissertation, PhD Thesis, Univ. Al. I. Cuza Iasi (2017).


%\bibitem{NU}
%Nakauchi, N., \& Urakawa, H.: Biharmonic hypersurfaces in a Riemannian manifold with non-positive Ricci curvature. Annals of global analysis and geometry, 40(2), 125-131 (2011). 

%\bibitem{U}
%Urakawa, H.: \textit{Geometry of biharmonic mappings: Differential geometry of variational methods.} World Scientific Publishing Company, Incorporated (2019).







%\bibitem{I}
%Dimitric, I.: Submanifolds of $\mathbb{E}^m$  with harmonic mean curvature vector. Bull. Inst. Math. Acad.
%Sinica 20, 53-65 (1992).

\bibitem{Re}
Reinhart, B. L.: Foliated manifolds with bundle-like metrics. Annals of Mathematics, 69(1), 119-132 (1959). 

%\bibitem{Rec}
%Reckziegel, H : On the eigenvalues of the shape operator of an isometric immersion into a space of constant curvature,
%Math. Annalen  243 , 71-82,(1979).
%pp 119-132



\bibitem{T}
Terng, C. L.: Isoparametric submanifolds and their Coxeter groups. Journal of Differential Geometry, 21(1), 79-107  (1985).
\bibitem{VDW}
Van der Waerden, B. L.: : \textit{Algebra 1}. 
 Ungar (1970).



\end{thebibliography}
 \end{document}